\thanks{We thank Maciej Dołęga, Łukasz Maślanka and Dan Romik for discussions and
			suggestions concerning the bibliography.}}
\newcommand{\R}{\mathbb{R}}
\newcommand{\Z}{\mathbb{Z}}
\newcommand{\cauchy}{\mathbf{G}}
\newcommand{\kerx}{\mathbbm{x}}
\newcommand{\kery}{\mathbbm{y}}
\newcommand{\kerell}{\mathbb{L}}
\newcommand{\conti}{continual\xspace}
\newcommand{\Conti}{Continual\xspace}
\newcommand{\dxy}{d_{XY}}
\newcommand{\smallomega}{\omega}
\newcommand{\referenceomega}{\overline{\Omega}}
\newcommand{\bigomega}{\Omega}
\newcommand{\zzeroplus}{z_+^{\max}}
\newcommand{\zzerominus}{z_-^{\min}}
\DeclareMathOperator{\AS}{AS}
\newcommand{\@giventhatstar}[2]{\left[#1\;\middle|\;#2\right]}
\newcommand{\@giventhatnostar}[3][]{#1(#2\;#1|\;#3#1)}
\newcommand{\giventhat}{\@ifstar\@giventhatstar\@giventhatnostar}
\begin{document}

\section{Introduction}
\subsection{Young diagrams}

\subsubsection{The French convention}

A Young diagram is a finite collection of boxes arranged in the positive
quarter-plane, aligned to the left and bottom (see Figure \ref{subfig:french}).
This particular way of drawing Young diagrams is known as \emph{the French
	convention}. For a Young diagram with $\ell$ rows, we associate an integer
partition $\lambda = (\lambda_1, \dots, \lambda_\ell)$, where $\lambda_j$
represents the number of boxes in the $j$-th row (counted from bottom to top).
Essentially, we identify a Young diagram with its corresponding partition
$\lambda$.

The motivation for studying Young diagrams lies in their natural occurrence in
representation theory. Specifically, they play a crucial role in the
representation theory of symmetric groups as well as in the representation
theory of general linear groups \cite{FultonHarris}.

\subsubsection{The Russian convention}

For asymptotic problems, it is convenient to draw the Young diagrams in the
\emph{Russian convention}, as shown in Figure \ref{subfig:russian}. This
convention corresponds to the coordinate system $(u,v)$ that is related to
the usual (French) Cartesian coordinates by
\begin{equation}
	\label{eq:russian}
	u = x - y, \qquad\qquad v = x + y. 
\end{equation}

The boundary of a Young diagram $\lambda$ is called its \emph{profile}, as
depicted in Figure \ref{subfig:french}. In the Russian coordinate system, the
profile can be seen as the plot of the function $\omega_\lambda \colon
\mathbb{R} \to \mathbb{R}_+$, as shown in Figure \ref{subfig:russian}.

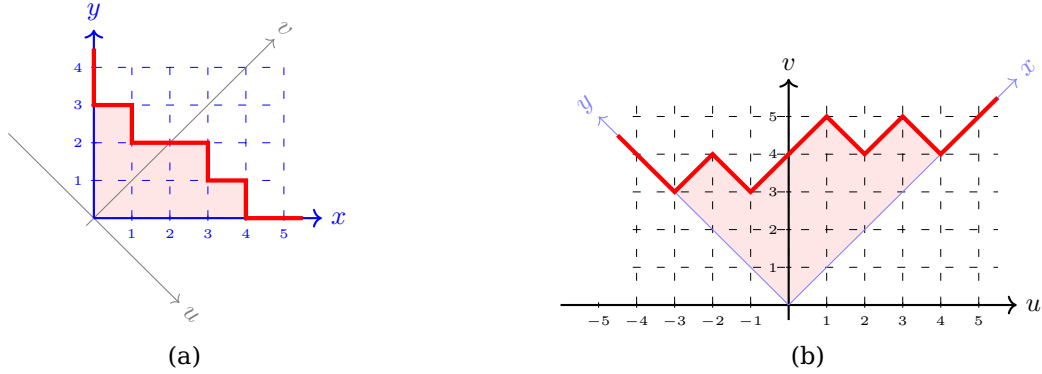
\begin{figure}
	\subfloat[]{
	\begin{tikzpicture}
		
		\begin{scope}[scale=0.5/sqrt(2),rotate=-45,draw=gray]
			
			\begin{scope}[draw=gray,rotate=45,scale=sqrt(2)]
				\fill[fill=red!10] (4,0) -- (4,1) -- (3,1) -- (3,2) -- (1,2)
				-- (1,3) -- (0,3) -- (0,0) -- cycle ;
			\end{scope}
			
			\begin{scope}[rotate=45,draw=blue,scale=sqrt(2)]
				\draw[ultra thin, loosely dashed] (0,0) grid (5,4);
			\end{scope}
			
			\draw[->,thin] (-4.5,0) -- (4.5,0)
			node[anchor=west,rotate=-45]{\textcolor{gray}{$u$}};
			
			\draw[->,thin] (0,-0.4) -- (0,9.5)
			node[anchor=south,rotate=-45]{\textcolor{gray}{$v$}};
			
			\begin{scope}[draw=blue,rotate=45,scale=sqrt(2)]
				
				\draw[->,thick] (0,0) -- (6,0) node[anchor=west]{\textcolor{blue}{$x$}};
				\foreach \x in {1, 2, 3, 4, 5}
				{ \draw (\x, -2pt) node[anchor=north] {\textcolor{blue}{\tiny{$\x$}}} -- (\x,
					2pt); }
				
				\draw[->,thick] (0,0) -- (0,5) node[anchor=south] {\textcolor{blue}{$y$}};
				\foreach \y in {1, 2, 3, 4}
				{ \draw (-2pt,\y) node[anchor=east] {\textcolor{blue}{\tiny{$\y$}}} -- (2pt,\y); }
				
				\draw[ultra thick,draw=red] (5.5,0) -- (4,0) -- (4,1) -- (3,1) --
				(3,2) -- (1,2) -- (1,3) -- (0,3) -- (0,4.5) ;
				
			\end{scope}
			
		\end{scope}
	\end{tikzpicture}
	\label{subfig:french}
}
\hfill
\subfloat[]
{
	\begin{tikzpicture}
		\begin{scope}[xshift=7cm, yshift=-0.5cm, scale=0.5]
			
			\begin{scope}[draw=gray,rotate=45,scale=sqrt(2)]
				\fill[fill=red!10] (4,0) -- (4,1) -- (3,1) -- (3,2) -- (1,2)
				-- (1,3) -- (0,3) -- (0,0) -- cycle ;
			\end{scope}
			
			\begin{scope}
				\clip (-4.5,0) rectangle (5.5,5.5);
				\draw[ultra thin, loosely dashed] (-6,0.01) grid (6,6);
			\end{scope}
			
			\draw[->,thick] (-6,0) -- (6,0) node[anchor=west]{$u$};
			\foreach \z in {-5, -4, -3, -2, -1, 1, 2, 3, 4, 5}
			{ \draw (\z, -2pt) node[anchor=north] {\tiny{$\z$}} -- (\z, 2pt); }
			
			\draw[->,thick] (0,-0.4) -- (0,6) node[anchor=south]{$v$};
			\foreach \t in {1, 2, 3, 4, 5}
			{ \draw (-2pt,\t) node[anchor=east] {\tiny{$\t$}} -- (2pt,\t); }

			\begin{scope}[draw=blue!50,rotate=45,scale=sqrt(2)]
				
				\draw[->,thin] (0,0) -- (6,0) node[anchor=west,rotate=45]
				{\textcolor{blue!50}{{$x$}}};
				
				\draw[->,thin] (0,0) -- (0,5) node[anchor=south,rotate=45]
				{\textcolor{blue!50}{{$y$}}};
				
				\draw[ultra thick,draw=red] (5.5,0) -- (4,0) -- (4,1) -- (3,1) --
				(3,2) -- (1,2) -- (1,3) -- (0,3) -- (0,4.5) ;
				
			\end{scope}
		\end{scope}
		
	\end{tikzpicture}
	\label{subfig:russian}
}
	\caption
	{
		The Young diagram $\lambda=(4,3,1)$ shown in
		\protect\subref{subfig:french}~the French convention, and 
		\protect\subref{subfig:russian}~the Russian convention. The thick solid
		red zigzag line
		represents the \emph{profile} $\omega_\lambda$ of the Young diagram. The
		coordinates system
		$(u,v)$ corresponding to the Russian convention and the coordinate
		system
		$(x,y)$ corresponding to the French convention are shown.
	}
	\label{fig:french} 	
\end{figure}

\subsection{Transition measure of a Young diagram}
\label{sec:transition-measure}

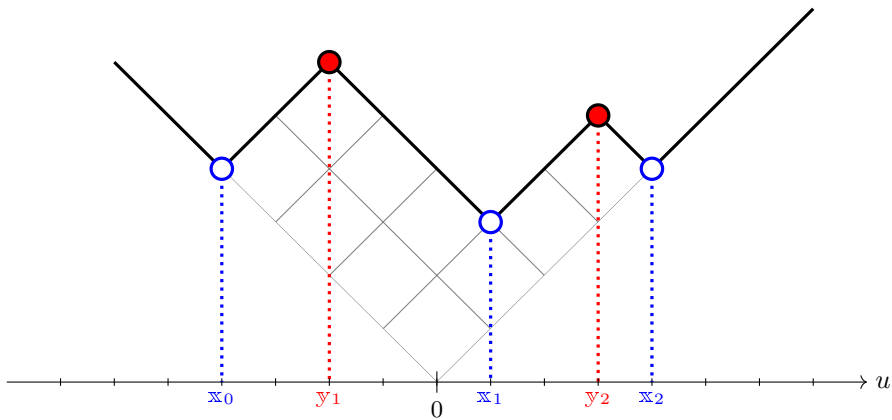
\begin{figure}
		\begin{tikzpicture}[scale=1,rotate=45]
		\coordinate (p0) at (4,0);
		\coordinate (p1) at (4,1);
		\coordinate (p2) at (2,1);
		\coordinate (p3) at (2,4);
		\coordinate (p4) at (0,4);
		\coordinate (p5) at (0,6);
		\begin{scope}
			\clip  (p0) -- (p1) -- (p2) -- (p3) -- (p4) -- (0,0);
			\draw[black!50] (0,0) grid (10,10);
		\end{scope}
		\draw[very thick] (7,0) -- (p0) -- (p1) -- (p2) -- (p3) -- (p4) -- (p5);
		\draw[->] (-4,4) -- (4,-4) node[anchor=west] {$u$};
		\draw[dotted,blue,very thick] (p0) -- (4/2,-4/2) node[anchor=north] {\small $\kerx_2$};
		\draw[dotted,blue,very thick] (p2) -- (1/2,-1/2) node[anchor=north] {\small $\kerx_1$};
		\draw[dotted,blue,very thick] (p4) -- (-4/2,4/2) node[anchor=north] {\small $\kerx_0$};
		\draw[dotted,red,very thick] (p1) -- (3/2,-3/2) node[anchor=north] {\small $\kery_2$};
		\draw[dotted,red,very thick] (p3) -- (-2/2,2/2) node[anchor=north] {\small $\kery_1$};
		\draw (0,0) +(0.1,0.1) -- +(-0.1,-0.1) node[anchor=north] {\small $0$};
		\draw[blue,very thick,fill=white] 
		(p0) circle (4pt)
		(p2) circle (4pt)
		(p4) circle (4pt);
		\draw[very thick,fill=red] 
		(p1) circle (4pt)
		(p3) circle (4pt);
		
		\foreach \u in {-7,...,7}
		{\draw (\u/2,-\u/2)  +(-1pt,-1pt) -- +(1pt,1pt); }
	\end{tikzpicture}
	\caption{Concave corners (blue and empty) and convex corners (red and
		filled) 
		of the Young diagram $(4,2,2,2)$ and their $u$-coordinates.}
	\label{fig:transition}
\end{figure}

For a given Young diagram~$\lambda$ we denote by
$\kerx_0<\cdots<\kerx_{\kerell}$ the
$u$-coordinates of its \emph{concave corners} and by
$\kery_1<\cdots<\kery_\kerell$ the
$u$-coordinates of its \emph{convex corners}, see Figure~\ref{fig:transition}. The
\emph{Cauchy transform} of $\lambda$ is defined as the rational
function 
\begin{equation}
	\label{eq:cauchy-def}     
	\cauchy_\lambda(z)=
	\frac{(z-\kery_1)\cdots(z-\kery_\kerell)}{(z-\kerx_0)\cdots(z-\kerx_\kerell)},
\end{equation}
see \cite{Kerov1993} and \cite[Chapter~4, Section~1]{KerovBook}. Note that in
the work of Kerov this function is called \emph{the generating function} of
$\lambda$. The Cauchy transform can be uniquely written as a sum of simple
fractions:
$$ \cauchy_\lambda(z)= \sum_{0\leq i \leq \kerell}  \frac{p_i}{z-\kerx_i} $$
with the positive coefficients $p_0,\dots,p_\kerell>0$ such that
$p_0+\cdots+p_\kerell=1$.
The \emph{transition measure} of $\lambda$ is defined 
as the discrete measure
\begin{equation}
	\label{eq:transition-def}
	\mu_\lambda= p_0\ \delta_{\kerx_0} + \cdots + p_\kerell\ \delta_{\kerx_\kerell}
\end{equation} 
which to the atom $\kerx_i$ associates the probability $p_i$.
In this way the rational function defined by \eqref{eq:cauchy-def} fulfills
$$ \cauchy_\lambda(z) = \int_{\R} \frac{1}{z-x} \dif \mu_\lambda(x),$$
so it is indeed the Cauchy transform of the measure  $\mu_\lambda$.

\subsection{Motivations and context for transition measure}

The definition of the transition measure presented in
Section~\ref{sec:transition-measure} has an advantage of being self-contained, but it
does not provide much insight into the motivations behind this concept. We will
recall some equivalent definitions of the transition measure which show its
relevance for probabilistic and algebraic methods of the representation theory
of the symmetric groups. We will also recall briefly some of its applications.

\subsubsection{Plancherel growth process}

The \emph{Plancherel growth process} is a natural Markov chain
$(\lambda^{(n)})$ on the set of Young diagrams. This random walk starts with
$\lambda^{(0)}=\emptyset$ being the empty diagram. At each step, the random
Young diagram $\lambda^{(n+1)}$ is obtained by adding a single box to one of
the concave corners of the previous Young diagram $\lambda^{(n)}$. If $\mu$ is
a Young diagram obtained from $\lambda$ by adding a single box, the transition
probability for this Markov chain
\begin{equation} 
	\label{eq:growth}
	\mathbb{P}\giventhat*{ \lambda^{(n+1)} = \mu }{ \lambda^{(n)}=\lambda } 
	= \frac{f^{\mu} }{(n+1) f^{\lambda}} 
\end{equation} 
is defined in a natural way in terms of the dimensions $f^\lambda$ and $f^\mu$
of \emph{the irreducible representations of the symmetric groups} defined by,
respectively, $\lambda$ and $\mu$. 

Equivalently, the Plancherel growth process
can be described as the outcome of the Robinson--Schensted--Knuth algorithm
(RSK) applied to a sequence of independent, identically distributed random
variables with the uniform distribution on the unit interval~$[0,1]$. 

For a
pedagogical introduction to this topic we refer to the book of Romik
\cite[Section~1.19]{Romik2015}.

\subsubsection{Plancherel growth process and the transition measure}
\label{sec:ktm-and-plancherel}

Let us fix a Young diagram $\lambda$; we will keep the notations from
Figure~\ref{fig:transition}. The collection of the transition probabilities
\eqref{eq:growth} over all valid choices of $\mu$ can be encoded by a discrete
probability measure on the real line, given as follows.  To a real number
$\kerx_i$ which corresponds to some concave corner of $\lambda$ we associate the
transition probability \eqref{eq:growth} from $\lambda$ to the diagram $\mu$
obtained from $\lambda$ by adding a single box at the corner $\kerx_i$.

Thanks to the hook-length formula for the dimensions of the irreducible
representations, the atoms of this discrete measure (each atom is given by the
right-hand side of \eqref{eq:growth}) can be explicitly calculated. After a heavy
cancellation of the factors in the hook-length formula for $f^\lambda$ and
$f^\mu$ this transition probability turns out to coincide with the
corresponding atom $p_i$ prescribed in Section~\ref{sec:transition-measure}.

\medskip

This was the original motivation for introducing the transition measure. Indeed,
transition measure is a convenient tool for the probabilistic and ergodic
approach to the problems of the (asymptotic) representation theory of the
symmetric groups.

\subsubsection{Transition measure and Jucys--Murphy elements} 

The \emph{Jucys--Murphy element}
\[ J_{n+1} = (1, n+1) + \cdots + (n, n+1) \in \mathbb{C}[S_{n+1}] \]
is an element of the group algebra of the symmetric group $S_{n+1}$,
given by the formal sum of the transpositions interchanging $n+1$ with the
smaller natural numbers. Jucys--Murphy elements are fundamental for the modern
approach to the representation theory of the symmetric groups, see 
\cite{Ceccherini-SilbersteinScarabotti2010} for a pedagogical introduction.

It was noticed by Biane \cite[Section~3.3]{Biane1998} that the transition
measure of a Young diagram $\lambda$ with $n$ boxes can be equivalently defined
as the spectral measure of the Jucys--Murphy element $J_{n+1}$ with respect to
some specific state defined in terms of the irreducible character of $\lambda$.
The result
of Biane implies that each moment 
\[ \int x^k \dif \mu_\lambda(x) \]
of the transition
measure is directly related to the character of $\lambda$ evaluated on (some
specific conditional expectation applied to) the power
$J_{n+1}^k$ of the Jucys--Murphy element.

\subsubsection{Transition measure and the characters of symmetric groups} This
link opened the possibility to look for exact formulas which would express the
irreducible characters of the symmetric group in terms of the transition measure
of the corresponding Young diagram. The advantage of such formulas (over some
classical tools such as \emph{the Murnaghan--Nakayama rule}) would lie in their
low computational complexity which would make them perfect for some asymptotic
problems of the representation theory.

It was soon noticed that the best quantities for this endeavor are not the
moments of the transition measure, but \emph{free cumulants}, slightly more
complex quantities which have their origin in the random matrix theory and
Voiculescu's non-commutative free probability \cite{NicaSpeicher-lectures}. The
irreducible characters can be expressed as polynomials, called \emph{Kerov
	polynomials}, in terms of the free cumulants of the corresponding transition
measure \cite{DFS}.

This explains why transition measure and its free cumulants turned out to be
important tools for probabilistic problems of the asymptotic representation
theory of the symmetric groups \cite{Biane1998,Biane2001,Sniady2006a}.

\subsection{\Conti diagrams}

\subsubsection{Motivations: limit theorems} One of the branches of the
asymptotic representation theory concerns limit theorems for large random Young
diagrams. For example, for each integer $n\geq 1$ let $\lambda^{(n)}$ be a
random Young diagram with $n$ boxes, with some specified probability
distribution. Informally speaking, we would like to consider a \emph{`resized
	Young diagram'}
\begin{equation}
	\label{eq:resized}
	\frac{1}{\sqrt{n}} \lambda^{(n)} 
\end{equation} 
which is obtained by drawing the boxes of $\lambda^{(n)}$ as squares with the
size $\frac{1}{\sqrt{n}}$, so that the total area of \eqref{eq:resized} is
equal to $1$. Unfortunately, this object does not make sense as a Young
diagram.

Even though the \emph{`resized Young diagram'}
\eqref{eq:resized} does not exist, it makes sense to speak about its
\emph{profile} which is defined as the homothety of the profile of the original
diagram~$\lambda^{(n)}$:
\begin{equation}
	\label{eq:resized-profile}
	\omega_{\left( \frac{1}{\sqrt{n}} \lambda^{(n)} \right)}
	(z) 
	= \frac{1}{\sqrt{n}}\  \omega_{
		\lambda^{(n)}}\!\left( \sqrt{n}\ z \right).   
\end{equation} 
A typical problem in the asymptotic representation theory would be to ask if the
sequence of such resized profiles \eqref{eq:resized-profile} converges to some
limit \cite{LoganShepp1977,VershikKerov1977}. This was the original motivation
for the notion of \emph{\conti diagrams} which encompasses the resized
profiles \eqref{eq:resized-profile} as well as their limits. We present the
details in the following.

\subsubsection{\Conti diagrams}
\label{sec:continuous}

We say that $\omega\colon \R \to \R_+$ is a \emph{\conti diagram} if the
following two conditions are satisfied:
\begin{itemize}
	\item $|\omega(z_1) - \omega(z_2) | \leq | z_1 - z_2|$ holds true for any
	$z_1,z_2\in\R$,
	\item $\omega(z) = |z|$ for sufficiently large $|z|$.
\end{itemize}
In the literature, such \conti diagrams are also referred to as
\emph{generalized diagrams}, \emph{continuous diagrams}, or simply
\emph{diagrams}. Notably, an important class of examples arises from the
profiles $\omega_{\lambda}$ associated with the usual Young diagrams.

\begin{remark}
	In his original paper \cite{Kerov1993}, Kerov considered a more general
	form for the second condition. Specifically, he required that \mbox{$\omega(z) =
		|z - z_0|$} should hold for some fixed value of $z_0\in\R$ and for
	sufficiently
	large $|z|$. The definition we use corresponds to the specific choice of
	$z_0 = 0$. Consequently, the \emph{\conti diagrams} discussed in the
	current paper
	align with what Kerov terms \emph{centered diagrams}.
\end{remark}

\subsection{Transition measure for \conti diagrams}

\subsubsection{Transition measure for zigzag  diagrams}
\label{sec:zigzag}

We will say that a \conti diagram $\omega$ is a \emph{zigzag} if
$\omega\colon\R\to\R_+$ is a piece-wise affine function with the slopes being
only $\pm 1$. Heuristically, this means that $\omega$ is a zigzag line as
depicted on Figure~\ref{fig:transition}. The only difference is that now the
$u$-coordinates of its concave corners $\kerx_0<\cdots<\kerx_{\kerell}$ as well
as the $u$-coordinates of its convex corners $\kery_1<\cdots<\kery_\kerell$ are
no longer assumed to be integer numbers.

For a zigzag diagram $\omega$ we define its Cauchy
transform $\cauchy_\omega$ in the same way as before, by the formula
\eqref{eq:cauchy-def}. The transition measure of~$\omega$, denoted by
$\mu_\omega$ is defined in the same way as before, by the formula
\eqref{eq:transition-def}.

\subsubsection{Transition measure for generic \conti diagrams} 

Let $\omega$ be a zigzag, we keep the notations from above. The second
derivative $\omega''$ is well-defined as a Schwartz distribution and can be
identified with a signed measure on the real line. The positive part of this
measure is supported in the concave corners $\kerx_0,\dots,\kerx_{\kerell}$
while the negative part of this measure is supported in the convex corners
$\kery_1,\dots,\kery_\kerell$, with each atom having equal weight~$2$. An
application of logarithm transforms the product on the right-hand side of
\eqref{eq:cauchy-def} into a sum. It was observed by Kerov that it can be
conveniently written in the form
\begin{multline}
	\label{eq:transition-general}
	\log\big[ z \cauchy_\omega(z) \big]= - 
	\int_{-\infty}^\infty \log (z-w) \ \left(\frac{\omega(w)-|w|}{2}\right)'' \dif w  \\
	= - \int_{-\infty}^\infty \frac{1}{z-w} \ \left(\frac{\omega(w)-|w|}{2}\right)' \dif w,
\end{multline}
where $z$ is a complex variable.

We can drop the assumption that $\omega$ is a zigzag and use
\eqref{eq:transition-general} to define the Cauchy transform $\cauchy_\omega$
for an arbitrary \conti diagram $\omega$. The transition measure~$\mu_\omega$ is
then defined from the Cauchy transform $\cauchy_\omega$ via Stieltjes inversion
formula. The \emph{cumulative function} $K_\omega$ of a \conti diagram $\omega$
is defined as the cumulative distribution function of the corresponding
transition measure
\[  K_\omega(z)= \mu_\omega\big( (-\infty,z] \big) \qquad \text{for $z\in\R$.} \]

\subsubsection{Transition measure for \conti diagrams as a homeomorphism}

Kerov (see \cite{Kerov1993} and \cite[Chapter~4, Section~1]{KerovBook}) proved
that (for each constant $C>0$) the map
\begin{equation}
	\label{eq:the-map}
	\omega \mapsto \mu_{\omega} 
\end{equation}
is a homeomorphism between the following two topological spaces:
\begin{itemize}
	\item
	the set of \conti diagrams $\omega$ with the property that 
	\[ \omega(z) = |z| \qquad \text{for each $z\notin [-C,C]$},\] 
	equipped with the topology given by the
	supremum distance, and 
	\item the set of centered probability measures with support contained in the interval
	$[-C,C]$, equipped with the weak topology of probability measures.
\end{itemize}

The fact that the map \eqref{eq:the-map} is a homeomorphism allows to retrieve
the transition measure of an arbitrary \conti diagram by approximating it by
a sequence of zigzag diagrams and then by taking the limit of their transition
measures. We will use this idea later on.

\subsection{The main problem: what is the modulus of continuity of the
	transition measure?}

In the current paper we address the problem which can be stated informally as
follows.
\begin{problem}[The main problem]
	\label{problem} 
	
	What is the modulus of continuity of the map \eqref{eq:the-map}?
	
	Given two \conti diagrams $\omega_1$ and $\omega_2$ such that their
	distance (with respect to the supremum distance) is `small', how small is the
	distance between the cumulative functions $K_{\omega_1}$ and $K_{\omega_2}$ (in
	some unspecified metric)?
\end{problem} 

Kerov's result
that \eqref{eq:the-map} is a homeomorphism implies that the distance between
$K_{\omega_1}$ and $K_{\omega_2}$ converges to zero (in some metric which
corresponds to the weak convergence of probability measures) as the distance
between $\omega_1$ and $\omega_2$ tends to zero. However, Kerov's original
proof \cite{Kerov1993} is not very constructive in the sense that it does not
provide much information about the speed of the convergence, and hence it does
not provide quantitative information about the `discrepancy' between the
cumulative functions $K_{\omega_1}$ and $K_{\omega_2}$.

\subsection{The motivations: Schensted row insertion applied to random input} 
\label{sec:marciniak-sniady}

Our motivation for considering Problem~\ref{problem} is related to our recent joint
work with Marciniak \cite{MarciniakSniady} about the statistical properties of
the \emph{Schensted row insertion} applied to random tableaux. We recall that
the Schensted row insertion is the basic component of the
Robinson--Schensted--Knuth algorithm \cite{Fulton1997}.

Up to some minor simplifications, for a given large Young diagram~$\lambda$ we
consider a uniformly random \emph{standard Young tableau} $T$ with the prescribed
shape $\lambda$. We ask about the position of the new box created by the row
insertion $T\leftarrow \mathbf{z}$, when a deterministic number $\mathbf{z}$ is
inserted into $T$. The position of the new box is clearly random, located in
one of the concave corners of $\lambda$. The statistical properties of its
probability distribution can be expressed in terms of the transition measure
$\mu_\lambda$ of the diagram $\lambda$.

The problem becomes even more interesting asymptotically, when the single Young
diagram $\lambda$ is replaced by a \emph{sequence} of Young
diagrams~$(\lambda^{(n)})$. In this setup one can ask whether the (suitably
rescaled) fluctuations of the position of the new box around the mean value
converge to some limit distribution. As an extra bonus, the Young
diagram~$\lambda^{(n)}$ may be chosen to be deterministic or random.

The main result of the paper \cite{MarciniakSniady} is that the fluctuations of
the position of the new box indeed converge to an explicit Gaussian
distribution, provided that the cumulative function of the rescaled diagram
$\lambda^{(n)}$
\[ z \mapsto K_{\left(
	\frac{1}{\sqrt{n}}\ \omega_{\lambda^{(n)}}
	\right)}(z) = 
K_{\lambda^{(n)}}\left( \sqrt{n} \ z \right) \] 
converges in probability to a deterministic limit which is sufficiently smooth.
Additionally, the rate of convergence must be fast enough, more specifically the
error must be at most $o\big( n^{- \frac{1}{4} } \big)$.

\medskip

The goal of the current paper is to provide tools for producing explicit natural
examples of large (deterministic or random) Young diagrams $\lambda$ for which
the cumulative function $K_\lambda$ can be written as a sum of a known smooth
function and a small error term with an explicit asymptotic bound. For such
examples the results of \cite{MarciniakSniady} are applicable. We will come back
to this topic in Section~\ref{sec:applicationsOLD}.

\subsection{The metric $\dxy$ on the set of \conti diagrams} 

For the purposes of this section we will use the following notational
shorthand. For real numbers $x,y\geq 0$ and a \conti diagram $\omega\colon\R \to\R_+$ we
will write $(x,y) \in \omega$ if the point $(x,y)$ belongs to the graph of
$\omega$ \emph{drawn in the French coordinate system} or, equivalently, if
\[   \omega(x-y) = x+y, \]
cf.~\eqref{eq:russian}.  Additionally, for $x\geq 0$ we denote by 
\[ \Pi_Y^\omega(x) = \big\{ y : (x,y) \in \omega \bigskip\} \subseteq \R_+ \]
the projection on the $y$-axis of the intersection of the plot of $\omega$ with
the line having a specified $x$-coordinate. One can verify that
$\Pi_Y^\omega(x)$ is a non-empty closed set. Similarly, for $y\geq 0$ we denote
by
\[ \Pi_X^\omega(y) = \big\{ x : (x,y) \in \omega \bigskip\} \subseteq \R_+\]
the projection on the $x$-axis.

If $\omega_1,\omega_2$ are \conti diagrams, we define their $y$-distance
\[ d_Y(\omega_1, \omega_2) =  
\sup_{x\geq 0} d_H\big( \Pi_Y^{\omega_1}(x), \Pi_Y^{\omega_2}(x) \big)
\]
as the supremum (over all choices of the $x$-coordinate) of the \emph{Hausdorff
	distance} $d_H$ between their $y$-projections. Heuristically, the metric
$d_Y$
is a way to quantify the discrepancy between \conti diagrams along the
$y$-coordinate. In a similar manner we define the $x$-distance between
\conti diagrams as
\[ d_X(\omega_1, \omega_2) =  
\sup_{y\geq 0} d_H\big( \Pi_X^{\omega_1}(y), \Pi_X^{\omega_2}(y) \big).
\]

Finally, we define the distance between $\omega_1$ and $\omega_2$ as the
maximum of the $x$- and the $y$-distance:
\[ \dxy(\omega_1,\omega_2) = \max\big( d_X(\omega_1,\omega_2),
d_Y(\omega_1,\omega_2) \big). \]

\subsection{The main result: modulus of continuity}

The following result provides an answer for Problem~\ref{problem}.

\begin{theorem}[The main result]
	\label{theo:main-modulus}
	Let $\bigomega\colon\R \to \R_+$ be a \conti diagram, let 
	$a<a_0<b_0<b$ be such that:
	\begin{enumerate}[label=(\roman*)]
		\item \label{enum:Lipschitz} the function $\bigomega$ restricted to the
		interval $[a,b]$ is a contraction (i.e., it is $(1-\delta)$-Lipschitz for some
		constant $\delta>0$);
		
		\item the transition measure $\mu_{\bigomega}$ restricted to the interval
		$[a,b]$ is absolutely continuous and has a density which is bounded from above
		by some constant.
	\end{enumerate}
	
	Then there exists a contant $C>0$ with the property 
	that for each $\epsilon\in \left( 0, \frac{1}{2} \right)$ and 
	any \conti diagram
	$\omega$ such that $\dxy(\bigomega, \omega) \leq \epsilon$ the following bound holds true:
	\begin{equation}
		\label{eq:main-theorem}
		 \sup_{z\in [a_0,b_0] } \big| K_{\omega}(z) - K_{\bigomega}(z) \big|  \leq  
	C \epsilon \log \frac{1}{\epsilon}.     
	\end{equation}
\end{theorem}

The proof is postponed to Section~\ref{sec:proof-of-main-result}.

\subsection{The main result, version with a sequence of diagrams}
\label{sec:applicationsOLD}

The following result (for the exponent $\alpha=\frac{1}{4}$) provides a wide
class of examples for the problem which we discussed in
Section~\ref{sec:marciniak-sniady}. Informally speaking, it says that if $\Omega$ is
locally a \emph{`nice'} \conti diagram and a sequence $(\omega_n)$ converges
to $\Omega$ in the metric~$\dxy$ with some prescribed speed then the cumulative
functions of~$(\omega_n)$ converge uniformly to the cumulative function of
$\Omega$ with almost the same speed.

\begin{theorem}[The main result, version for a sequence of diagrams]
	\label{theo:main-for-sequence}
	Let the assumptions of Theorem~\ref{theo:main-modulus} be fulfilled.
	Let $\alpha>0$ be fixed. 	
	Let $(\omega_n)$ be a sequence of \conti diagrams such that 
	\[  \dxy(\bigomega, \omega_n) = o\left(  \frac{1}{  n^{\alpha} \log n}  \right)
	.\]    
	
	\smallskip
	
	Then
	\[ \sup_{z\in [a_0,b_0] } \big| K_{\omega_n}(z) - K_{\bigomega}(z) \big|  = 
	o\left( \frac{1}{n^\alpha} \right). \]    
\end{theorem}

This result follows directly from Theorem~\ref{theo:main-modulus} by setting $\omega:= \omega_n$
and $\epsilon:= \dxy(\bigomega,\omega_n)$. 

While Theorem~\ref{theo:main-modulus} provides more explicit characterization of the
modulus of continuity, both theorems offer comparable asymptotic power. In our
subsequent applications, we will primarily reference
Theorem~\ref{theo:main-for-sequence} for its elegant formulation of the
asymptotic behavior.

\subsection{Toy example} 

In this section we introduce a toy example to which the main result
(Theorem~\ref{theo:main-for-sequence}) applies. Although this example is straightforward
and elementary, allowing us to calculate the transition measures explicitly, it
is still valuable to see how the general tool applies to a specific context.

\subsubsection{The staircase diagrams}

\newcommand{\nklatki}{n}

In the context of Theorem~\ref{theo:main-for-sequence} we will pass to a subsequence
defined as
\[ \nklatki={\nklatki}_N = 1+\cdots+N= \frac{N(N+1)}{2}. \]
We define
\[
	\lambda^{(\nklatki_N)}= (N,N-1,\dots,3,2,1) 
\]
to be a \emph{staircase Young diagram} with $\nklatki_N$ boxes. 
We define 
\[ \omega_n = \omega_{\frac{1}{\sqrt{n}} \lambda^{(n)}  } \]
to be the rescaled profile of the staircase diagram.

We define the \emph{triangle diagram}
\[ \Omega(z) = 
\begin{cases} 
	\sqrt{2} & \text{for $|z|\leq \sqrt{2}$}, \\
	|z|      & \text{for $|z|\geq \sqrt{2}$}.
\end{cases}
\]
Then the sequence $(\omega_n)$ converges to $\Omega$ in the metric $\dxy$, with the
rate of convergence given by
\begin{equation}
	\label{eq:rate-of-convergence}
	\dxy\big( \omega_n, \Omega \big) = 
	O\left( \frac{1}{N}  \right) = 
	O\left( \frac{1}{\sqrt{n}}  \right). 
\end{equation}

\subsubsection{The transition measures}

A straightforward calculation of the residues of the Cauchy transform shows that
the transition measure of $\lambda^{(\nklatki)}$ is supported on the set of
even, respectively odd integer numbers
\[ \{-N, -N+2, \dots, N-2, N\} \] 
with the probabilities
\[
	\mu_{\lambda^{(n)}}(2k-N) = \frac{1}{2^{2N}} \binom{2k}{k}
	\binom{2N-2k}{N-k}
	\qquad \text{for $k\in\{0,\dots,N\}$.} 
\]
This probability distribution appears naturally in the
context of random walks and the \emph{arcsine theorem}, 
see \cite[Chapter~III]{FellerVol1}.  

On the other hand, 
the transition measure $\mu_{\Omega}$
of the triangle diagram is the arcsine law supported on the interval $I=\big[
-\sqrt{2}, \sqrt{2} \big]$ with the density
\[ f_{\AS}(z) = \frac{1}{\pi \sqrt{2-z^2} } \qquad \text{for $z\in I$}.\]

\subsubsection{Application of Theorem~\ref{theo:main-for-sequence}}

By \eqref{eq:rate-of-convergence} it follows that the assumptions of
Theorem~\ref{theo:main-for-sequence} are fulfilled for the exponent $\alpha= \frac{1}{2} -
\epsilon$, for any $\epsilon>0$, and for any quadruple $a<a_0<b_0<b$ from the
open interval $( - \sqrt{2}, \sqrt{2}) $. As a consequence, the rate of
convergence of the cumulative function of the rescaled staircase diagram
$\omega_n$ towards the cumulative function of the triangle diagram $\Omega$ is
bounded from above by
\begin{equation}
	\label{eq:rate-AS}
	\sup_{z\in [a_0,b_0] } \big| K_{\omega_n}(z) - K_{\bigomega}(z) \big|  = 
	o\left( \frac{1}{n^{\frac{1}{2}-\epsilon}} \right) 
	= o\left( \frac{1}{N^{1-2 \epsilon}} \right)
\end{equation} 
for each $\epsilon>0$.

\subsubsection{What is the optimal rate of convergence?}

A straightforward calculation based on the Stirling approximation provides an
upper bound of the form
\begin{equation}
	\label{eq:rate-AS2}
	\sup_{z\in [a_0,b_0] } \big| K_{\omega_n}(z) - K_{\bigomega}(z) \big|  = 
	O\left( \frac{1}{\sqrt{n}} \right) 
	= O\left( \frac{1}{N} \right).
\end{equation} 
On the other hand, the transition measure
of $\omega_n$ is a discrete measure which in the interval $(a_0, b_0)$ has
atoms which are of order $\Theta\left( \frac{1}{N} \right)$. It follows that
also the left-hand side of \eqref{eq:rate-AS2} is \emph{at least} of order
$\Theta\left( \frac{1}{N} \right)= \Theta\big( \frac{1}{\sqrt{n}} 
\big)$, hence the bound \eqref{eq:rate-AS2} is optimal.

\medskip

We can see that Theorem~\ref{theo:main-for-sequence} provided us with an upper bound
\eqref{eq:rate-AS} which is very close to the optimal result
\eqref{eq:rate-AS2}. We conclude that there is not much room for
improvement left in Theorem~\ref{theo:main-for-sequence}

\subsection{The main result for a sequence of random \conti diagrams}

Theorem~\ref{theo:main-for-sequence} allows the following extension for random diagrams.

\begin{theorem}[The main result, version for a sequence of random diagrams]
	\label{theo:main-random} We keep the assumptions from
	Theorem~\ref{theo:main-for-sequence}, but now $(\omega_n)$ is a sequence of \emph{random}
	\conti diagrams such that
	\[ n^{\alpha} \log n \cdot \dxy(\bigomega, \omega_n) \xrightarrow[n\to\infty]{P} 0.\]    
	
	\smallskip
	
	Then
	\[ n^\alpha \sup_{z\in [a_0,b_0] } \big| K_{\omega_n}(z) - K_{\bigomega}(z) \big|  \xrightarrow[n\to\infty]{P}  0.\]    
\end{theorem}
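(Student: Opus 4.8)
The plan is to deduce \cref{theo:main-random} from \cref{theo:shape-to-trans} by a routine ``subsequence along a probability-one event'' argument, which is the standard device for upgrading a deterministic-rate statement to a statement about convergence in probability. The point is that \cref{theo:shape-to-trans} is not merely an asymptotic statement about one fixed sequence; when read carefully it asserts that there is a \emph{deterministic function} $\Phi$ (depending only on $\bigomega$, $\alpha$ and the quadruple $a<a_0<b_0<b$), with $\Phi(t)\to 0$ as $t\to 0$, such that whenever a \conti diagram $\omega$ satisfies $d(\bigomega,\omega)\le \varepsilon$ (with $\varepsilon$ small enough that $\omega$ still inherits the relevant local regularity), one has the quantitative bound
\[
\sup_{z\in[a_0,b_0]} \bigl| K_{\omega}(z) - K_{\bigomega}(z)\bigr| \le \Phi\bigl( d(\bigomega,\omega) \bigr).
\]
So the first step I would take is to isolate this ``uniform modulus of continuity'' reformulation of \cref{theo:shape-to-trans} --- i.e.\ to observe that the proof in \cref{sec:proof-of-shape-to-trans} in fact produces such an explicit $\Phi$ (of the form $t\mapsto C\, t\,\log(1/t)$ or similar) rather than just a statement about a single sequence $(\omega_n)$.

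Given that reformulation, the argument is short. Set $X_n = n^\alpha \log n \cdot d(\bigomega,\omega_n)$ and $Y_n = n^\alpha \sup_{z\in[a_0,b_0]}|K_{\omega_n}(z)-K_{\bigomega}(z)|$; by hypothesis $X_n \xrightarrow{P} 0$ and we must show $Y_n \xrightarrow{P} 0$. I would argue by the standard characterization of convergence in probability via subsequences: it suffices to show that every subsequence of $(Y_n)$ has a further subsequence converging to $0$ almost surely, and in fact (since the $Y_n$ are nonnegative) it suffices to bound $Y_n$ deterministically in terms of $X_n$ on the event where $X_n$ is small. Concretely, on the event $\{X_n \le \varepsilon_n\}$ for a slowly decaying deterministic sequence $\varepsilon_n\to 0$, we have $d(\bigomega,\omega_n) \le \varepsilon_n /(n^\alpha \log n)$, hence by the reformulated \cref{theo:shape-to-trans},
\[
Y_n \le n^\alpha \, \Phi\!\left( \frac{\varepsilon_n}{n^\alpha \log n} \right) \le n^\alpha \cdot C\, \frac{\varepsilon_n}{n^\alpha \log n}\, \log\!\left( \frac{n^\alpha \log n}{\varepsilon_n} \right) = C\,\varepsilon_n \cdot \frac{\log(n^\alpha \log n / \varepsilon_n)}{\log n} \longrightarrow 0,
\]
where the last limit uses that $\log(n^\alpha\log n/\varepsilon_n)/\log n \to \alpha$ as long as $\varepsilon_n$ does not decay too fast (e.g.\ $\varepsilon_n \ge 1/n$ suffices, and one may always slow the decay). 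Then for any $\eta>0$, $\mathbb P(Y_n > \eta) \le \mathbb P(X_n > \varepsilon_n) \to 0$ once $n$ is large enough that $C\varepsilon_n \cdot \frac{\log(\cdots)}{\log n} < \eta$, using $X_n\xrightarrow{P}0$.

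There is one genuine technical point to handle rather than a mere formality: \cref{theo:shape-to-trans} requires that $\omega_n$, restricted to $[a,b]$, be close enough to $\bigomega$ that the conclusion applies --- in the deterministic theorem this is automatic for large $n$, but for random $\omega_n$ one must confine the argument to the event $\{ d(\bigomega,\omega_n) \le \varepsilon_0\}$ for a fixed small $\varepsilon_0$ below which the Lipschitz/density hypotheses are inherited locally (this event has probability $\to 1$ by the hypothesis $X_n\xrightarrow{P}0$), and on its complement simply use the trivial deterministic bound $Y_n \le n^\alpha$ which contributes $0$ to the limit of $\mathbb P(Y_n>\eta)$ because that complementary event is negligible. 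I expect the \textbf{main obstacle} to be purely expository: making sure the proof of \cref{theo:shape-to-trans} is phrased (or can be cited) so that the \emph{explicit} modulus $\Phi$, with its logarithmic factor, is available uniformly over all admissible $\omega$; once that is granted, everything above is a two-line measure-theoretic deduction and no new estimates on transition measures are needed.
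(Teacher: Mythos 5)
Your proposal is correct and follows essentially the same route as the paper: the paper also extracts from the proof of \cref{theo:shape-to-trans} the uniform deterministic bound of order $O\bigl(\epsilon\log\frac{1}{\epsilon}\bigr)$, applies it on the event $\{d(\bigomega,\omega_n)<\varepsilon_n\}$ with $\varepsilon_n = C/(n^\alpha\log n)$ (which has probability tending to $1$ by hypothesis), and concludes by letting the arbitrary constant $C$ tend to zero — which is the same device as your slowly-decaying $\varepsilon_n$. The technical point you flag (that the quantitative modulus must be available uniformly over all admissible $\omega$, not just along one sequence) is indeed the crux, and it is exactly what the paper's proof of \cref{theo:shape-to-trans} supplies via the inequality \eqref{eq:darowanej-nierownosci-nie-patrzy-sie-w-zeby}.
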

Again, this result follows directly from Theorem~\ref{theo:main-modulus}.

\subsection{Content of the paper}

We start in Section~\ref{sec:comparison} with a simple monotonicity result
(Proposition~\ref{prop:comparison-of-transition}) which, roughly speaking, says that when
one \emph{`adds new boxes'} to a \conti diagram on the right-hand side, or
\emph{`removes some boxes'} on the left-hand side, the values of the cumulative
function in the middle can only increase. Thanks to this monotonicity result we
will later estimate the cumulative function of generic diagrams in terms of the
cumulative function of some \emph{nice} \conti diagram over which we have
a good control.

\smallskip

In Sections~\ref{sec:shifted} and \ref{sec:shifted-measure} we address the following problem.
In the metric space of the \conti diagrams (equipped with the metric $\dxy$) we
consider the ball $B_{\epsilon,\Omega}$ which has radius $\epsilon>0$ and is
centered in some \conti diagram $\Omega$. Inside this ball we consider a subset
\begin{equation}
	\label{eq:ball}
	\big\{  \omega\in B_{\epsilon,\Omega} : \omega(z_0) = v \big\} 
\end{equation}
of diagrams which at given $z_0\in\R$ take a specified value $v$. \emph{What can
	we say about the supremum of the map $\omega \mapsto K_{\omega}(z_0)$ over this
	set \eqref{eq:ball}?}

In Section~\ref{sec:shifted} we will construct a diagram $\omega$ in \eqref{eq:ball}
for which the cumulative function $K_{\omega}(z_0)$ takes the maximal value. 
As we shall see, this diagram is given by the \emph{$\epsilon$-shift} of~$\Omega$ along some specific affine function~$f$, cf.~Proposition~\ref{prop:steepest}.

In Section~\ref{sec:shifted-measure} we will find an upper bound for the supremum 
(Proposition~\ref{prop:tails}) in terms of~$\bigomega$ and its transition measure.

\smallskip

In Section~\ref{sec:bounds} we prove a twin pair of results:
Theorem~\ref{theo:module-continuity-A} and Theorem~\ref{theo:module-continuity-B}. 
Together they give
an upper bound for the difference 
\begin{equation}
	\label{eq:difference}
	\big| K_{\Omega}(z_0)- K_{\omega}(z_0)\big|
\end{equation}
of the cumulative functions of two \conti diagrams $\Omega$ and $\omega$.
The roles played by these diagrams are quite different: in the applications
$\Omega$ is \emph{`nice'} (for example, $\Omega\colon \R\to\R_+$ may be given by
an explicit analytic expression, additionally it may have a derivative which
locally is away from $\pm
1$, etc.), while $\omega$ is quite generic and we not have much information
about it. The aforementioned upper bound for \eqref{eq:difference} is
expressed in terms of the distance $\dxy(\Omega,\omega)$ between the diagrams, as
well as in terms of the shape of $\Omega$, and the transition measure of
$\Omega$. Notably, this bound does not depend on some fine details of~$\omega$.

\smallskip

Finally, in Section~\ref{sec:proof-of-main-result} we apply
Theorems~\ref{theo:module-continuity-A} and \ref{theo:module-continuity-B} in order to prove
Theorem~\ref{theo:main-modulus}.

\section{Steeper diagram has a larger cumulative function}
\label{sec:comparison}

\begin{proposition}
	\label{prop:comparison-of-transition}
	Let $\omega_1,\omega_2 \colon \R\to\R_+$ be two \conti diagrams and let
	$z_0\in\R$ be fixed.
	Assume that the following two conditions hold true:
	\begin{equation}
		\label{eq:tilt}
		\left\{ 
		\begin{aligned}
			\omega_1(z) & \geq \omega_2(z) \qquad \text{for each $z\leq z_0$}, \\
			\omega_1(z) & \leq \omega_2(z) \qquad \text{for each $z\geq z_0$}.
		\end{aligned}
		\right.
	\end{equation}

	\smallskip
	
		Then the left limit of the cumulative function of diagram $\omega_1$
		and the value of the cumulative function of diagram $\omega_2$
		at point $z_0$ satisfy
	\begin{equation}
		\label{eq:cdf-mono}
		\lim_{\tau \to 0^+} K_{\omega_1}(z_0 - \tau) \leq
		K_{\omega_2}(z_0). 
	\end{equation}
\end{proposition}
The remaining part of this section is devoted to the proof.

\subsection{Special case: Young diagrams}
We start with the special case when $\omega_1= \omega_{\lambda^1}$ and
$\omega_2=\omega_{\lambda^2}$ are profiles of some Young diagrams $\lambda^1$
and $\lambda^2$. 
We will show a stronger result
\begin{equation}
	\label{eq:cdf-mono-a}
	K_{\omega_1}(z_0 ) \leq K_{\omega_2}(z_0). 
\end{equation}

The
assumption \eqref{eq:tilt} implies that the diagram $\lambda^1$ can be
transformed to~$\lambda^2$ by a
sequence of \emph{elementary changes}. Each elementary change is either: 
\begin{enumerate}[label=(\alph*)]
	\item \label{item:elementary-add} 
	an \emph{addition of a box} with the $u$-coordinate of the two central
	vertices equal to $\xi\in \Z$ with $\xi\geq z_0+1$, see Figure~\ref{fig:newbox},
	or
	\item \label{item:elementary-remove} 
	a \emph{removal of a box} with the $u$-coordinate of the two central
	vertices equal to $\xi \in \Z$ with \mbox{$\xi\leq z_0-1$}.
\end{enumerate}
Each neighboring pair of Young diagrams along this sequence of elementary
changes fulfills an analogue of the inequalities \eqref{eq:tilt} and we will
show below that an analogue of \eqref{eq:cdf-mono-a} holds true for each such a 
pair.
By chaining the sequence of the inequalities of the form \eqref{eq:cdf-mono-a}
along the sequence of the elementary changes, the desired inequality for the
endpoints $\omega_1$ and $\omega_2$ would follow.

\begin{figure}
	\begin{tikzpicture}[scale=1,rotate=45]
		\coordinate (p0) at (3,0);
		\coordinate (p1) at (3,1);
		\coordinate (p2) at (2,1);
		\coordinate (p3) at (2,4);
		\coordinate (p4) at (0,4);
		\coordinate (p5) at (0,6);
		\draw[blue,fill=blue!20] (p2) rectangle +(1,1);
		\begin{scope}
			\clip  (p0) -- (p1) -- (p2) -- (p3) -- (p4) -- (0,0);
			\draw[black!50] (0,0) grid (10,10);
		\end{scope}
		\draw[ultra thick] (7,0) -- (p0) -- (p1) -- (p2) -- (p3) -- (p4) --
		(p5);
		\draw[->] (-4,4) -- (4,-4) node[anchor=west] {$u$};
		
		\draw[dotted,blue,very thick] (p2) -- (1/2,-1/2) node[anchor=north]
		{\textcolor{black}{\small $\xi$}};
		
		\foreach \u in {-7,...,7}
		{\draw (\u/2,-\u/2)  +(-1pt,-1pt) -- +(1pt,1pt); }
	\end{tikzpicture}
	\caption{A Young diagram $\lambda$ (white boxes) and its profile
		$\omega_{\lambda}$ (the thick zigzag line). The $u$-coordinate of the
		two
		central vertices of the blue box is equal to~$\xi$.} \label{fig:newbox}
\end{figure}
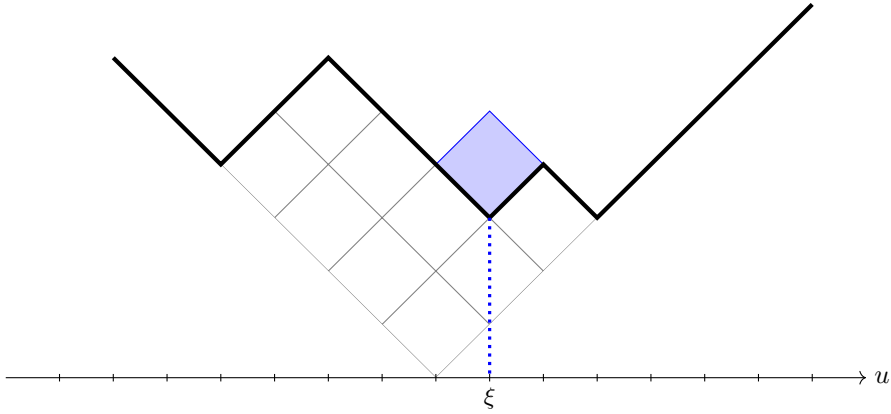

\smallskip

We proceed now with an elementary change of type \ref{item:elementary-add}. Let
$\omega=\omega_{\lambda}$ be a profile of a Young diagram and let
$\overline{\omega}$ be the profile of the diagram obtained from $\omega$ by
adding a single box, as in Figure~\ref{fig:newbox}. The elementary change transforms
the profile $\omega$ to $\overline{\omega}$. By comparing the sets of convex
and the concave corners for these two profiles it follows that their Cauchy
transforms are related by the following equality:
\[     \cauchy_{\overline{\omega}}(z) =  \frac{(z-\xi)^2}{(z-\xi-1)(z-\xi+1)} 
\cauchy_\omega(z). \]
It follows that the atoms of the transition measures of $\omega$ and
$\overline{\omega}$ are related as follows:
\begin{itemize}
	\item for any integer $z\notin\{ \xi-1,\ \xi,\ \xi+1 \}$ 
	\begin{multline}
		\label{eq:addbox-away}
		\mu_{\overline{\omega}}\big( \{ z \} \big)  =
		\frac{(z-\xi)^2}{(z-\xi-1)(z-\xi+1)} \mu_{\omega}\big( \{ z \} \big)   \\ =
		\left[
		1+ \frac{1}{(z-\xi)^2-1} \right] \mu_{\omega}\big( \{ z \} \big) \geq 
		\mu_{\omega}\big( \{ z \} \big);
	\end{multline}
	\item additionally, the transition measure $\mu_{\overline{\omega}}$ 
	potentially may have atoms in $\xi\pm 1$ while the transition measure
	$\mu_{\omega}$
	does not have any atoms there, so the inequality     
	\begin{equation}
		\label{eq:addbox-close}
		\mu_{\overline{\omega}}\big( \{ z \} \big)  \geq \mu_{\omega}\big( \{ z
		\}
		\big)
	\end{equation}
	holds true also for $z\in \{ \xi-1,\ \xi+1 \}$.
\end{itemize}
Conclusion: for each integer $z\neq \xi$ we proved the inequality \eqref{eq:addbox-close}.

The cumulative distribution functions can be expressed as sums of the
appropriate atoms, hence
\[ K_{\overline{\omega}}(z_0) = 
\sum_{\substack{z \in \Z \\ z \leq z_0}} \mu_{\overline{\omega}}\big(\{z\} \big)	
\geq  
\sum_{\substack{z \in \Z \\ z \leq z_0}} \mu_{{\omega}} \big(\{z\} \big) 
= K_{{\omega}}(z_0)
\]
where we used the inequalities \eqref{eq:addbox-away} and
\eqref{eq:addbox-close}, as well as the fact that if the integer $z$
contributes to the above sums then $z \leq \xi-1$. This concludes the proof for
an elementary change of type \ref{item:elementary-add}.

\smallskip

We proceed now with an elementary change of type \ref{item:elementary-remove}.
We keep the above notations; the elementary change transforms the \conti
diagram $\overline{\omega}$ to $\omega$. The cumulative distribution functions
can be expressed in terms of the tails of the respective distributions
therefore
\[ K_{{\omega}}(z_0) = 
1- \sum_{\substack{z \in \Z \\ z > z_0}} \mu_{{\omega}}\big(\{z\} \big)  \geq 
1- \sum_{\substack{z \in \Z \\ z > z_0}} \mu_{\overline{\omega}}\big(\{z\}
\big)  
= K_{\overline{\omega}}(z_0)
\]
where we used the inequalities \eqref{eq:addbox-away} and
\eqref{eq:addbox-close}, as well as the fact that if the integer $z$
contributes to the above sums then $z \geq \xi+1$. This concludes the proof for
an elementary change of type \ref{item:elementary-remove}.

\subsection{Rescaled Young diagrams} 

For any $c>0$ and any Young diagram
$\lambda$, the transition measure of the rescaled profile 
\[ \omega_{ c \lambda}(z) = \frac{1}{c}\ \omega(c z) \]
is a dilation of the transition of the original Young diagram $\lambda$. As a
consequence, the inequality \eqref{eq:cdf-mono-a} holds true also when
$\omega_1= \omega_{c
	\lambda^1}$ and $\omega_2=\omega_{c \lambda^2}$ are rescaled profiles of two
Young diagrams.

\subsection{The general case} 
\label{sec:comparison-proof-general}

For each $i\in\{1,2\}$ and an integer $n\geq
1$ we define~$\lambda^{i,n}$ to be the largest Young diagram $\lambda$ with the
property that
\[ \omega_{ \frac{1}{n} \lambda }(z) \leq \omega_i(z) \]
holds true for each $z\in\R$.
The pair of rescaled profiles 
\[ \omega_{1,n}:=\omega_{ \frac{1}{n} \lambda^{1,n }}
\quad 
\text{and}
\quad 
\omega_{2,n}:= \omega_{ \frac{1}{n} \lambda^{2,n }}\] 
fulfills an analogue of the system of inequalities \eqref{eq:tilt}, hence the
above discussion is applicable and therefore
\begin{equation}
	\label{eq:k-is-nice}
	K_{\omega_{1,n}}(z_0) \leq K_{\omega_{2,n} }(z_0).
\end{equation}

For each $i\in\{1,2\}$, the sequence of generalized diagrams
$(\omega_{i,n})_{n\geq 1}$
converges to the generalized diagram $\omega_i$ in the suitable topological
space described
by Kerov \cite{Kerov1993}. From the result of Kerov \cite{Kerov1993} it follows
that the sequence of transition measures $(\mu_{\omega_{i,n}})_{n\geq 1}$
converges in the weak topology to the transition measure $\mu_{\omega_i}$. 

Let $z_- < z_0$ be a continuity point of $K_{\omega_1}$ 
and let $z_+> z_0$ be a continuity point of $K_{\omega_2}$.
It follows that
\begin{multline*} 
	K_{\omega_1}(z_-)= \lim_{n\to\infty} K_{\omega_{1,n}} (z_-) \leq 
	\liminf_{n\to\infty} K_{\omega_{1,n}}(z_0) \\ \leq  
	\liminf_{n\to\infty} K_{\omega_{2,n}}(z_0) 
	\leq \lim_{n\to\infty} K_{\omega_{2,n}}(z_+) =
	K_{\omega_2}(z_+).
\end{multline*}
Since the set of the continuity points of $K_{\omega_1}$ and $K_{\omega_2}$ is
dense, we may consider the left limit of the left-hand side at $z_- = z_0$ and
the right limit of the right-hand side at $z_+= z_0$. As a consequence,
\[ \lim_{\tau\to 0} K_{\omega_{1}}(z_0-\tau) \leq
K_{\omega_2}(z_0), \]
as required. This completes the proof of Proposition~\ref{prop:comparison-of-transition}.

\section{Shifted diagrams}
\label{sec:shifted}

\subsection{The input data}
\label{sec:input}

Let $\Omega\colon \R\to\R_+$ be a \conti diagram, let $\epsilon>0$ be a real number and let $f\colon \R \to\R$ be an affine function of the form
\[ f(x) = x + b\]
for some constant $b>0$. For this input data we will construct in the current
section a new \conti diagram
$\referenceomega=\referenceomega_{\Omega,\epsilon,f}$, called
\emph{$\epsilon$-shift of $\Omega$ along $f$}.

\begin{figure}
	
	\begin{tikzpicture}[scale=0.3, rotate=45]

		\draw[Green!30, line width=5 pt] (0,20) 
		-- ++(0,-8) 
		-- ++(3,0) -- ++(0, -2) node[anchor=north,opacity=1,black]
		{$\referenceomega$}
		-- ++(5,0) 
		-- ++(0, -4) 
		-- ++(11,0) 
		-- ++(0, -3) 
		-- ++(2,0) node[anchor=south,opacity=1,black] {$\referenceomega$}
		-- ++(0, -3) 
		-- ++(4,0) 
		; 
		
		\draw[->, thick, black!50]  (-12,12) -- (15,-15) node[anchor=
		west,black]{$u$};
		\draw[->, thick, black!50]  (-2,-2) -- (15,15) node[anchor=
		south,black]{$v$};
		\draw[thin, black!50] (0,20) -- (0,0) -- (20,0);

		\draw[thick, densely dotted] (-4,6) -- +(32,0) node[anchor=west]
		{\textcolor{black}{$f$}};

		\draw[ultra thick] (0,24) 
		-- ++(0,-8) 
		-- ++(3,0) -- ++(0, -2) 
		-- ++(5,0) -- ++(0, -6) 
		-- ++(7,0) -- ++(0, -5) 
		-- ++(2,0) -- ++(0, -3) 
		-- ++(12,0)  node[anchor=west] {\textcolor{black}{$\bigomega$}};
		;

		\draw[blue, dashed,  thick] (0,20) 
		-- ++(0,-8) 
		-- ++(3,0) -- ++(0, -2) 
		-- ++(5,0) -- ++(0, -6) 
		-- ++(7,0) -- ++(0, -5) 
		-- ++(2,0) -- ++(0, -3) 
		-- ++(10,0) 
		node[anchor=south] {\textcolor{blue}{blue}};    
		; 
		
		\draw (4,24) node[anchor=south] {\textcolor{red}{red}};
		\draw[red, dashed, thick] (4,24)
		-- ++(0,-8) 
		-- ++(3,0) -- ++(0, -2) 
		-- ++(5,0) -- ++(0, -6) 
		-- ++(7,0) -- ++(0, -5) 
		-- ++(2,0) -- ++(0, -3) 
		-- ++(3,0) 
		;

		\draw[red,dotted,thick, ->] (0,22) -- +(4,0);
		\draw[blue,dotted,thick, ->] (24,0) -- +(0,-4);

		\fill (8,6) circle (0.3);
		\fill (19,6) circle (0.3);
		
		\draw[dashed] (8,6) -- (1,-1);
		\draw[thick] (1,-1) +(0.2,0.2) -- +(-0.2,-0.2) node[anchor=north]
		{$z_-$};
		\draw[dashed] (19,6) -- (6.5,-6.5);
		\draw[thick] (6.5,-6.5) +(0.2,0.2) -- +(-0.2,-0.2) node[anchor=north]
		{$z_+$};

	\end{tikzpicture}
	\caption{The solid black zigzag line is the \conti diagram $\bigomega$.
		The upper
		red dashed zigzag line was obtained by translation of $\bigomega$ by
		the vector
		(in the French coordinates) $x=\epsilon$, $y=0$. The bottom blue dashed
		zigzag line was
		obtained by translation of $\bigomega$ by the vector $x=0$,
		$y=-\epsilon$. 
		The red and the blue dotted arrows indicate these translation vectors. 
		\\
		The densely dotted diagonal
		black line is the plot of the affine function $f$. 
		The two black circles indicate the intersection points of the function
		$f$
		with, respectively, the blue and the red zigzag lines. Their
		$u$-coordinates are equal to, respectively, $z_-$ and $z_+$. 
		\\ 
		The thick green zigzag line is the \conti diagram $\referenceomega$
		which was obtained by combining a segment of the blue dashed line 
		(to the left of the first black circle), a segment of the function~$f$
		(between the black circles), and a segment of the red dashed line 
		(to the right of the second black circle). }
	\label{fig:three-zigzags}
\end{figure}
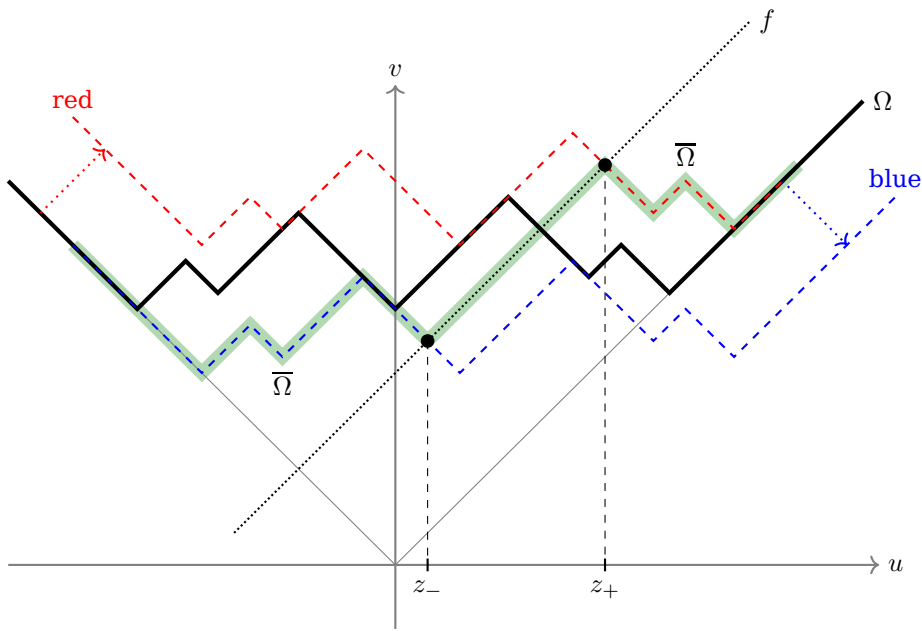

\subsection{The upper and the lower bound for the $\epsilon$-ball}
\label{sec:red-and-blue}

Let $\smallomega$ be an arbitrary \conti diagram such that $\dxy(\bigomega,
\smallomega) \leq \epsilon$. Since $\epsilon$ is an upper bound on the deviation
between the curves $\bigomega$ and $\smallomega$ along the $x$-coordinate, it
follows that
\begin{equation} 
	\label{eq:red-bound}
	\smallomega(z) \leq \bigomega(z-\epsilon) + \epsilon 
\end{equation}
holds true for any $z\in \R$. The function which appears on the right hand side,
i.e.,
\[ z\mapsto \bigomega(z-\epsilon) + \epsilon \]
is depicted in Figure~\ref{fig:three-zigzags} as the red dashed line. Analogously,
since $\epsilon$ is an upper bound on the deviation between the curves
$\bigomega$ and $\smallomega$ along the $y$-coordinate, it follows that
\begin{equation} 
	\label{eq:blue-bound} 
	\smallomega(z) \geq \bigomega(z-\epsilon) - \epsilon 
\end{equation}
holds true for any $z\in \R$.
The function which appears on the right hand side, i.e.,
\[ z\mapsto \bigomega(z-\epsilon) - \epsilon \]
is depicted in Figure~\ref{fig:three-zigzags} as the blue dashed line. In this way
any \conti diagram within the $\epsilon$-ball around $\bigomega$ (with
respect to the metric $\dxy$) lies within the area between the red and the blue
zigzag lines.

\subsection{The intersection points $z_-$ and $z_+$}
\label{sec:intersection}

Let $z_-$ be the minimal number such that 
\begin{equation}  
	\label{eq:zminus}
	f(z_-) = \bigomega(z_- -\epsilon)- \epsilon 
\end{equation}
and  let $z_+$ be the maximal number such that 
\begin{equation} 
	\label{eq:zetplus}
	f(z_+) = \bigomega(z_+ -\epsilon) + \epsilon. 
\end{equation}
With the notations of Figure~\ref{fig:three-zigzags} these are the $u$-coordinates of
the intersection of the dotted line~$f$ with the blue zigzag line and with the
red zigzag line. Clearly, the intersection points exist and the numbers $z_-$,
$z_+$ are well-defined and fulfill
\[ z_- \leq z_+,\]
see Figure~\ref{fig:three-zigzags}.

	\subsection{The shifted diagram}
	\label{sec:reference-diagram}
	
	We define a \conti diagram $\referenceomega=\referenceomega_{\Omega,\epsilon,f}$ given by
	\[ \referenceomega(z) = \begin{cases} 
		\bigomega(z-\epsilon) - \epsilon  & \text{for $z\leq z_-$}, \\
		f(z) & \text{for $z_-\leq z \leq z_+$}, \\
		\bigomega(z-\epsilon) + \epsilon & \text{for $z\geq z_+$}.
	\end{cases}
	\]
	Equivalently,
	\begin{equation}
		\label{eq:minmax}
		\referenceomega(z) =  
		\min\bigg[ \max\Big[ \bigomega(z-\epsilon) - \epsilon, \quad
		f(z) \Big], \quad
		\bigomega(z-\epsilon) + \epsilon \bigg].
	\end{equation}
	We refer to $\referenceomega$ as \emph{$\epsilon$-shift of $\Omega$ along $f$}.
	In Figure~\ref{fig:three-zigzags} this diagram is shown as the thick green zigzag
	line; it is a piece-wise combination of the red zigzag line, the dotted line,
	and the blue zigzag line. 
	
	\medskip
	
	This definition has the following heuristic interpretation in the special case
	when $\epsilon$ and $b=f(0)$
	are positive integers, and $\bigomega$ is a profile of some Young diagram $\lambda$. In this case, in the French coordinate system the curve~$f$ is a
	horizontal straight line with the equation $y=b$. Then the shifted diagram $\referenceomega$ is the
	profile of the Young diagram obtained from $\bigomega$ by the following
	procedure:
	\begin{itemize}
		\item we extend each of the rows of $\lambda$ lying below the horizontal 
		line $f$ by $\epsilon$ additional boxes,
		
		\item among the rows which are lying above the horizontal line $f$, we remove
		the bottom $\epsilon$ rows, and we shift the remaining rows down, so that they
		are placed on top of the horizontal line~$f$.
	\end{itemize}
	
	\subsection{Shifted diagram as a reference point for the $\epsilon$-ball}

	\begin{proposition}
		\label{prop:steepest}
		Let $z_0\in\R$, $\epsilon>0$, let $\smallomega$, $\bigomega$ be \conti
		diagrams such that $\dxy(\smallomega,\bigomega)\leq \epsilon$ and such that
		$\smallomega(z_0)> z_0$. Let
		\begin{equation}
			\label{eq:f-for-z0}
			f(z)  =  z- z_0+  \smallomega(z_0)
		\end{equation}
		be the unique affine function with slope $1$ and such that
		$f(z_0)=\smallomega(z_0)$. Let
		$\referenceomega=\referenceomega_{\Omega,\epsilon,f}$ be the $\epsilon$-shifted
		diagram $\Omega$ along $f$.
		
		\medskip
		
		Then the left limit of the cumulative function of $\omega$ fulfills
		\[ 		\lim_{\tau \to 0^+} K_{\omega}(z_0 - \tau) \leq
		K_{\referenceomega}(z_0). \]
	\end{proposition}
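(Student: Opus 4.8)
The plan is to recognise the claim as a direct instance of the monotonicity result \cref{lem:comparison-of-transition}, applied with $\omega_1:=\smallomega$ and $\omega_2:=\referenceomega$ at the point $z_0$. Indeed, the conclusion \eqref{eq:cdf-mono} of that proposition is verbatim the inequality to be proved, so the whole task reduces to checking its hypothesis \eqref{eq:tilt} for this particular pair, namely
\[ \smallomega(z)\ge\referenceomega(z)\ \text{ for } z\le z_0, \qquad\qquad \smallomega(z)\le\referenceomega(z)\ \text{ for } z\ge z_0. \]
Before that I would note that the function $f$ defined by \eqref{eq:f-for-z0} is of the form $f(x)=x+b$ with $b=\smallomega(z_0)-z_0>0$ (this is exactly where the hypothesis $\smallomega(z_0)>z_0$ enters), so the construction of \cref{sec:input} applies and $\referenceomega=\referenceomega_{\bigomega,\epsilon,f}$ is a genuine \conti diagram: its min--max description \eqref{eq:minmax} exhibits it as a minimum of maxima of $1$-Lipschitz functions, hence it is $1$-Lipschitz, and it coincides with $|z|$ for large $|z|$. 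This admissibility is what makes \cref{lem:comparison-of-transition} applicable.

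To verify \eqref{eq:tilt} I would combine the formula \eqref{eq:minmax} with two elementary observations. First, $d(\bigomega,\smallomega)\le\epsilon$ together with \eqref{eq:red-bound} and \eqref{eq:blue-bound} sandwiches $\smallomega$ between the ``blue'' and ``red'' curves, i.e.\ $\bigomega(z-\epsilon)-\epsilon\le\smallomega(z)\le\bigomega(z-\epsilon)+\epsilon$ for every $z\in\R$. Second, since $\smallomega$ is $1$-Lipschitz while $f$ has slope $1$ and $f(z_0)=\smallomega(z_0)$, one has $f(z)\le\smallomega(z)$ for $z\le z_0$ and $f(z)\ge\smallomega(z)$ for $z\ge z_0$; for instance, for $z\le z_0$ we have $f(z)-\smallomega(z)=(z-z_0)+\bigl(\smallomega(z_0)-\smallomega(z)\bigr)\le(z-z_0)+(z_0-z)=0$, the inequality being the Lipschitz bound $\smallomega(z_0)-\smallomega(z)\le z_0-z$, and the other case is symmetric. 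Plugging these into \eqref{eq:minmax}: for $z\le z_0$ both entries of the inner maximum are $\le\smallomega(z)$, so $\referenceomega(z)\le\max\bigl[\bigomega(z-\epsilon)-\epsilon,\,f(z)\bigr]\le\smallomega(z)$; for $z\ge z_0$ the inner maximum is $\ge f(z)\ge\smallomega(z)$ and the outer term $\bigomega(z-\epsilon)+\epsilon$ is also $\ge\smallomega(z)$, so the minimum of the two is $\ge\smallomega(z)$, i.e.\ $\referenceomega(z)\ge\smallomega(z)$. This is precisely \eqref{eq:tilt}, and \cref{lem:comparison-of-transition} then delivers the stated inequality.

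I do not anticipate a real obstacle here: the substantive step is noticing that the statement is an instance of the monotonicity lemma, after which the two one-sided inequalities merely have to be read off the min--max description \eqref{eq:minmax}. The only points that deserve a little care are confirming that $\referenceomega$ is an admissible \conti diagram (so that \cref{lem:comparison-of-transition} is legitimately invoked) and keeping track of the orientation of the tilt \eqref{eq:tilt}, which is dictated by the sign convention $b>0$ guaranteeing that the line $f$ meets the graph of $\smallomega$ at $z_0$ with slope $1$ from the correct side.
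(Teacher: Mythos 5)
Your proposal is correct and follows exactly the paper's own route: the paper proves \cref{prop:steepest} by asserting that the construction of $\referenceomega$ yields the tilt inequalities \eqref{eq:tilt} with $\omega_1=\smallomega$, $\omega_2=\referenceomega$, and then invoking \cref{lem:comparison-of-transition}. You merely make explicit the verification (via \eqref{eq:red-bound}, \eqref{eq:blue-bound}, the $1$-Lipschitz comparison with $f$, and the min--max formula \eqref{eq:minmax}) that the paper leaves to the reader, and your checks are accurate.
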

	\begin{proof}
		From the construction of $\referenceomega$ it follows that
		\begin{align*}
			\smallomega(z) & \geq \referenceomega(z) \qquad \text{for each $z\leq
				z_0$},\\
			\smallomega(z) & \leq \referenceomega(z) \qquad \text{for each $z\geq z_0$.}
		\end{align*}
		Proposition~\ref{prop:comparison-of-transition} completes the proof.
	\end{proof}

	\section{Transition measure for shifted diagrams}
	\label{sec:shifted-measure}
	
	The main result of the current section is Proposition~\ref{prop:tails} which provides an
	upper bound for the tail of the transition measure of a shifted diagram.
	Proposition~\ref{prop:tails-zigzag} will serve as a technical intermediate step.

	\subsection{Function $P_{z_+}^{\min}$}	
	In the remaining part of this section $\epsilon>0$ is a fixed constant.
	It will be convenient to treat the dependence of $P_{z_+}^{\min}$ (see below) on
	$\epsilon$ as implicit.
	
	For a real number $z_+$ we define
	\begin{align}
		\label{eq:p-min}
		P_{z_+}^{\min}(z) & = 
		\begin{cases} \displaystyle  
			1 - \frac{\epsilon}{z+\epsilon-z_+} & \text{for $z\geq z_+$}, \\
			0 & \text{otherwise,}
		\end{cases}  \\[2ex]
		\nonumber
		& =
		\begin{cases} \displaystyle  
			\frac{z-z_+}{z+\epsilon-z_+} & \text{for $z\geq z_+$}, \\[2ex]
			0 & \text{otherwise.}
		\end{cases}
	\end{align}
	The function $z\mapsto P_{z_+}^{\min}(z)$ is continuous and weakly increasing;
	it takes values from the interval $[0,1)$.

	\subsection{Lower bound for the transition measure of
		$\referenceomega$, only zigzag diagrams} 
	
	We start with an additional assumption that the \conti diagram $\bigomega$
	is a zigzag, cf.~Section~\ref{sec:zigzag}. In this way we can use the notations from
	Section~\ref{sec:transition-measure}, in particular let
	$\kerx_0<\cdots<\kerx_{\kerell}$ be the $u$-coordinates of the concave corners
	of $\bigomega$ and let $\kery_1<\cdots<\kery_\kerell$ be the convex corners, see
	Figure~\ref{fig:transition}.
	
	\begin{proposition}
		\label{prop:tails-zigzag} 
		
		Let $\bigomega$ be a zigzag diagram, and let
		$\epsilon>0$, $f$ and $z_+$ be as described in
		Sections~\ref{sec:input} and \ref{sec:intersection}, and let $\referenceomega=
		\referenceomega_{\Omega,f,\epsilon}$ be the corresponding shifted diagram.
		
		Then for any non-negative function $\phi\colon \R \to \R_+$	
		\[    \int_{-\infty}^\infty   \phi(z)  \dif\mu_{\referenceomega}(z)  \geq	 
		\int_{-\infty}^\infty  \phi(z+\epsilon)\ P_{z_+}^{\min}(z+\epsilon) \dif \mu_{\bigomega}(z). \]
	\end{proposition}
	
	Above, $\phi$ plays the role of a \emph{test function} and this result can be interpreted informally as inequality 
	\[ \mu_{\referenceomega}(\cdot) \geq  P_{z_+}^{\min}({}\cdot{} )\ \mu_{\bigomega}({}\cdot{} -\epsilon) \]
	between measures.
	
	The remaining part of the current section is devoted to the proof. 
	
	\subsubsection{The Cauchy transform of $\referenceomega$} 
	The shifted convex and the concave corners 
	\[ \kerx_0 + \epsilon< \kery_1+ \epsilon < \kerx_1+ \epsilon < \cdots 
	< \kery_{\kerell}+ \epsilon < \kerx_{\kerell}+ \epsilon\]
	split the real line into a number of (finite or semi-infinite) intervals; the
	number $z_-$ belongs to one of them. Note that the case when $ \kerx_i +
	\epsilon < z_- \leq \kery_{i+1} + \epsilon$ is not possible since this would
	contradict the minimality of $z_-$, see
	Figures~\ref{fig:three-zigzags} and \ref{fig:three-zigzags-simplified}. There exists
	therefore an index $i\in\{0,\dots,\kerell\}$ such that
	\[  \kery_{i} + \epsilon < z_- \leq \kerx_{i} + \epsilon,\]
	see Figure~\ref{fig:three-zigzags-simplified}.
	In the case $i=0$ we use the convention that $\kery_0= - \infty$ so that the
	first inequality is automatically fulfilled. 
	Similarly, there exists an index
	$j\in\{1,\dots,\kerell\}$ such that
	\[  \kery_{j} + \epsilon \leq  z_+ < \kerx_{j} + \epsilon, \]
	see Figure~\ref{fig:three-zigzags-simplified}.
	
	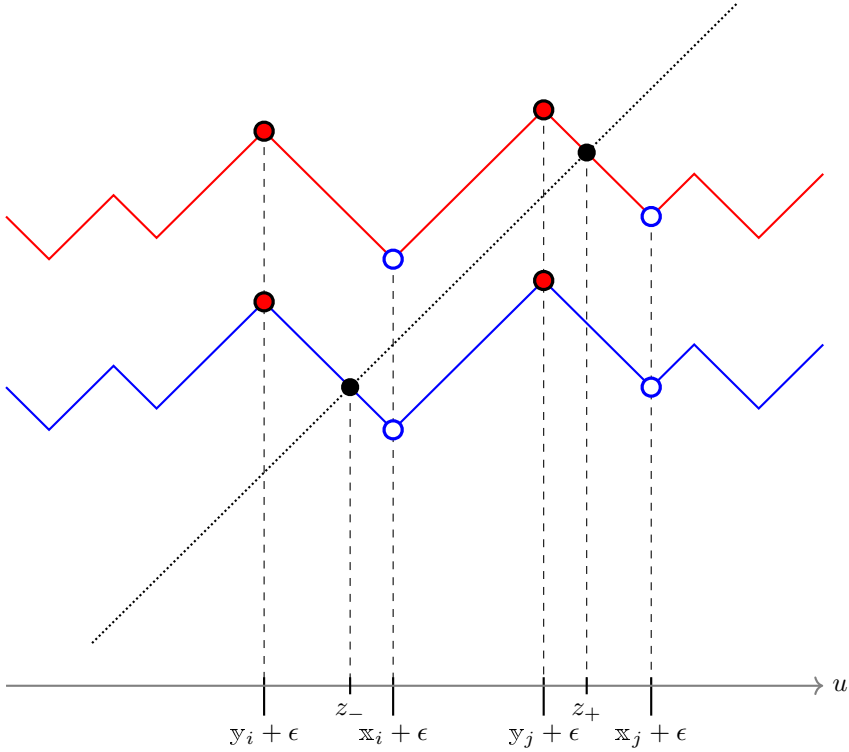
\begin{figure}
		
		\begin{tikzpicture}[scale=0.4, rotate=45]

			\draw[dashed] (12,14) -- (-1,1);
			\draw[thick] (-1,1) +(0.2,0.2) -- +(-0.7,-0.7) node[anchor=north]
			{$\kery_i + \epsilon$};

			\draw[dashed] (12,8) -- (2,-2);
			\draw[thick] (2,-2) +(0.2,0.2) -- +(-0.7,-0.7) node[anchor=north]
			{$\kerx_i + \epsilon$};

			\draw[dashed] (19,8) -- (5.5,-5.5);
			\draw[thick] (5.5,-5.5) +(0.2,0.2) -- +(-0.7,-0.7) node[anchor=north]
			{$\kery_j + \epsilon$};

			\draw[dashed] (19,3) -- (8,-8);
			\draw[thick] (8,-8) +(0.2,0.2) -- +(-0.7,-0.7) node[anchor=north]
			{$\kerx_j + \epsilon$};

			\draw[->, thick, black!50]  (-7,7) -- (12,-12) node[anchor=
			west,black]{$u$};
			\draw[thick, densely dotted] (-4,6) -- +(30,0);

			\draw[blue, thick] (0,14) 
			-- ++(0,-2) 
			-- ++(3,0) -- ++(0, -2) 
			-- ++(5,0) -- ++(0, -6) 
			-- ++(7,0) -- ++(0, -5) 
			-- ++(2,0) -- ++(0, -3) 
			-- ++(3,0) 
			; 

			\draw[red, thick] (4,18)
			-- ++(0,-2) 
			-- ++(3,0) -- ++(0, -2) 
			-- ++(5,0) -- ++(0, -6) 
			-- ++(7,0) -- ++(0, -5) 
			-- ++(2,0) -- ++(0, -3) 
			-- ++(3,0) 
			;

			\fill (8,6) circle (0.3);
			\fill (19,6) circle (0.3);
			
			\draw[dashed] (8,6) -- (1,-1);
			\draw[thick] (1,-1) +(0.2,0.2) -- +(-0.2,-0.2) node[anchor=north]
			{$z_-$};
			\draw[dashed] (19,6) -- (6.5,-6.5);
			\draw[thick] (6.5,-6.5) +(0.2,0.2) -- +(-0.2,-0.2) node[anchor=north]
			{$z_+$};

			\draw[very thick,fill=red] (19,8) circle (0.3);
			\draw[very thick,fill=red] (15,4) circle (0.3);
			
			\draw[very thick,fill=red]  (12,14) circle (0.3);
			\draw[very thick,fill=red]  (8,10) circle (0.3);

			\draw[blue,very thick,fill=white] (19,3) circle (0.3);
			\draw[blue,very thick,fill=white] (15,-1) circle (0.3);

			\draw[blue,very thick,fill=white] (12,8) circle (0.3);
			\draw[blue,very thick,fill=white] (8,4) circle (0.3);

		\end{tikzpicture}
		
		\caption{A simplified version of Figure~\protect\ref{fig:three-zigzags}. The upper
			red zigzag line as well as the lower blue zigzag line are, as before,
			translations of $\bigomega$. The $u$-coordinates of their convex and the
			concave corners coincide, up to a shift by~$\epsilon$, with their
			counterparts
			for $\bigomega$, see Figure~\ref{fig:transition}. The convention for convex and
			concave corners is the one from Figure~\ref{fig:transition}. The two black circles
			maintain their meaning from Figure~\protect\ref{fig:three-zigzags}.}
		
		\label{fig:three-zigzags-simplified}    
	\end{figure}
	
	\begin{lemma}
		The relationship between the Cauchy transforms  of $\referenceomega$ and
		$\bigomega$ is given by
		\begin{equation}
			\label{eq:cauchy-modified}
			\cauchy_{\referenceomega}(z) = \cauchy_\bigomega(z-\epsilon)\  P(z),
		\end{equation} 
		where
		\begin{equation}
			\label{eq:magic-product-Kerov} P(z) = 
			\frac{z-z_+}{z-z_-} 
			 \prod_{k=i}^{j-1}
				\frac{z-(\kerx_k+\epsilon)}{z- (\kery_{k+1}+\epsilon)}.
		\end{equation}
	\end{lemma}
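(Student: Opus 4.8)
The plan is to read off explicitly all concave and convex corners of the zigzag diagram $\referenceomega$ from the piecewise description in \cref{sec:reference-diagram}, and then to deduce \eqref{eq:cauchy-modified} by a direct manipulation of the product formula \eqref{eq:cauchy-def}.

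By construction (\cref{sec:reference-diagram}), $\referenceomega$ is a zigzag \conti diagram: it is continuous, it equals $|z|$ for $|z|$ large, and on each of its three pieces it is affine with slope $\pm1$ (the affine pieces of $\bigomega(\cdot-\epsilon)\pm\epsilon$ have slope $\pm1$, and $f$ has slope $1$). Hence $\cauchy_{\referenceomega}$ is given by \eqref{eq:cauchy-def} applied to the corners of $\referenceomega$. I would also record two facts about the translated zigzags $z\mapsto\bigomega(z-\epsilon)-\epsilon$ (the \emph{blue} curve) and $z\mapsto\bigomega(z-\epsilon)+\epsilon$ (the \emph{red} curve): since both are translations of $\bigomega$ in the French plane, their concave corners have $u$-coordinates $\kerx_0+\epsilon<\dots<\kerx_{\kerell}+\epsilon$ and their convex corners have $u$-coordinates $\kery_1+\epsilon<\dots<\kery_{\kerell}+\epsilon$; and, by the interlacing $\kerx_0<\kery_1<\kerx_1<\dots<\kery_{\kerell}<\kerx_{\kerell}$ of the corners of $\bigomega$, each of the blue and the red curve is decreasing precisely on $(-\infty,\kerx_0+\epsilon)$ and on the intervals $(\kery_k+\epsilon,\kerx_k+\epsilon)$, $k=1,\dots,\kerell$.

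Using the indices $i\le j$ fixed above, which satisfy $\kery_i+\epsilon<z_-\le\kerx_i+\epsilon$ and $\kery_j+\epsilon\le z_+<\kerx_j+\epsilon$, I would now determine the corners of $\referenceomega$ from the defining formula. On $(-\infty,z_-]$ the diagram $\referenceomega$ equals the blue curve, whose corners strictly below $z_-$ are the concave corners $\kerx_0+\epsilon,\dots,\kerx_{i-1}+\epsilon$ and the convex corners $\kery_1+\epsilon,\dots,\kery_i+\epsilon$. On $[z_+,\infty)$ it equals the red curve, whose corners strictly above $z_+$ are the concave corners $\kerx_j+\epsilon,\dots,\kerx_{\kerell}+\epsilon$ and the convex corners $\kery_{j+1}+\epsilon,\dots,\kery_{\kerell}+\epsilon$. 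On $(z_-,z_+)$ it equals the affine function $f$ and has no corners. Finally, at $z_-$ the slope of $\referenceomega$ jumps from $-1$ (the blue curve is decreasing on $(\kery_i+\epsilon,\kerx_i+\epsilon)$, which contains a left neighbourhood of $z_-$) to $+1$, so $z_-$ is a concave corner of $\referenceomega$; symmetrically, at $z_+$ the slope jumps from $+1$ to $-1$ (the red curve is decreasing on $(\kery_j+\epsilon,\kerx_j+\epsilon)$, which contains a right neighbourhood of $z_+$), so $z_+$ is a convex corner.

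Feeding these two lists into \eqref{eq:cauchy-def} yields
\[ \cauchy_{\referenceomega}(z)= \frac{(z-z_+)\,\prod_{1\le k\le i}(z-\kery_k-\epsilon)\,\prod_{j<k\le\kerell}(z-\kery_k-\epsilon)}{(z-z_-)\,\prod_{0\le k<i}(z-\kerx_k-\epsilon)\,\prod_{j\le k\le\kerell}(z-\kerx_k-\epsilon)}. \]
Since $\cauchy_\bigomega(z-\epsilon)=\prod_{1\le k\le\kerell}(z-\kery_k-\epsilon)\big/\prod_{0\le k\le\kerell}(z-\kerx_k-\epsilon)$ by \eqref{eq:cauchy-def}, multiplying the numerator and the denominator above by $\prod_{i\le k<j}(z-\kery_{k+1}-\epsilon)=\prod_{i<m\le j}(z-\kery_m-\epsilon)$ and regrouping the factors turns the right-hand side into $\cauchy_\bigomega(z-\epsilon)\,P(z)$ with $P$ as in \eqref{eq:magic-product-Kerov}, which is precisely \eqref{eq:cauchy-modified}; as everything is a rational function, boundary coincidences such as $z_-=\kerx_i+\epsilon$ or $z_+=\kery_j+\epsilon$ (in which the corner placed at $z_\pm$ merely merges with a corner of the blue or red curve) cause no trouble. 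The point I expect to require the most care is this corner count: one must verify that the blue curve is genuinely decreasing immediately to the left of $z_-$ and the red curve immediately to the right of $z_+$, so that $z_-$ and $z_+$ are true corners and not removable kinks — which is exactly what the inequalities defining $i$ and $j$ (in particular the exclusion of the case $\kerx_i+\epsilon<z_-\le\kery_{i+1}+\epsilon$) were arranged to guarantee. Once the two corner lists are pinned down, the passage to \eqref{eq:cauchy-modified} is a purely algebraic rearrangement.
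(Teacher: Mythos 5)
Your proof is correct and follows essentially the same route as the paper: you identify the concave corners of $\referenceomega$ as $\kerx_0+\epsilon,\dots,\kerx_{i-1}+\epsilon,\ z_-,\ \kerx_j+\epsilon,\dots,\kerx_{\kerell}+\epsilon$ and the convex corners as $\kery_1+\epsilon,\dots,\kery_i+\epsilon,\ z_+,\ \kery_{j+1}+\epsilon,\dots,\kery_{\kerell}+\epsilon$, exactly as in the paper, and then compare zeros and poles via \eqref{eq:cauchy-def}. You are merely more explicit than the paper (which defers the corner count to ``an inspection of the figures''), in particular in verifying that $z_-$ and $z_+$ are genuine corners and in handling boundary coincidences; the resulting cancellation giving $P(z)$ is carried out correctly.
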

	\begin{proof}
		An inspection of Figures~\ref{fig:three-zigzags} and \ref{fig:three-zigzags-simplified} shows
		that $\referenceomega$ has the
		following set of concave corners:
		\[ \kerx_0 + \epsilon, \dots, \kerx_{i-1} + \epsilon,\qquad 
		z_-,\qquad \kerx_{j}+\epsilon,  \dots, \kerx_{\kerell}+\epsilon, \]  
		and the following set of concave corners:
		\[ \kery_1 + \epsilon, \dots, \kery_{i} + \epsilon, \qquad 
		z_+,\qquad 
		\kery_{j+1}+\epsilon,  \dots, \kery_{\kerell}+\epsilon. \]  
		The proof is completed by comparing the zeros and the poles of the rational
		functions which appear on both sides of \eqref{eq:cauchy-modified}.
	\end{proof}
	Our strategy is to use this formula in order to express the cumulative function
	of $\referenceomega$ in terms of the cumulative function for $\bigomega$.

	\subsubsection{Identity fulfilled by convex and concave corners}
	
	\begin{lemma}
		With the above notations,
		\begin{equation}
			\label{eq:kerov-centered} z_+ - z_- = \epsilon+ \sum_{k=i}^{j-1} 
			\kery_{k+1} - \kerx_k.
		\end{equation}
	\end{lemma}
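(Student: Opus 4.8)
The plan is to reduce \eqref{eq:kerov-centered} to a length count for the up-slopes of the translated zigzag $\bigomega(\cdot-\epsilon)$ on the interval $[z_-,z_+]$. First I would rewrite the defining relations \eqref{eq:zminus} and \eqref{eq:zetplus} of $z_-$ and $z_+$ as
\[ \bigomega(z_--\epsilon) = f(z_-)+\epsilon, \qquad \bigomega(z_+-\epsilon) = f(z_+)-\epsilon, \]
and subtract them. Since $f$ has slope $1$ we have $f(z_+)-f(z_-)=z_+-z_-$, so
\[ \bigomega(z_+-\epsilon) - \bigomega(z_--\epsilon) = (z_+-z_-) - 2\epsilon. \]
(In passing this shows $z_-<z_+$, since $z_-=z_+$ would force $\bigomega(z-\epsilon)-\epsilon=\bigomega(z-\epsilon)+\epsilon$.)

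Next I would split $[z_-,z_+]$ into the finitely many maximal subintervals on which $\bigomega(\cdot-\epsilon)$ is affine; as $\bigomega$ is a zigzag, each such piece has slope $+1$ or $-1$. Let $L_+$ and $L_-$ denote the total lengths of the slope-$(+1)$ and slope-$(-1)$ parts. Then trivially $z_+-z_- = L_+ + L_-$, while integrating the derivative gives $\bigomega(z_+-\epsilon) - \bigomega(z_--\epsilon) = L_+ - L_-$. Feeding these two identities into the displayed relation above forces $L_- = \epsilon$, hence $L_+ = (z_+-z_-)-\epsilon$. It then remains to identify $L_+$ with $\sum_{i\leq k<j}(\kery_{k+1}-\kerx_k)$: using the corner structure recorded just before the lemma, $\bigomega(\cdot-\epsilon)$ has slope $+1$ exactly on the intervals $[\kerx_k+\epsilon,\kery_{k+1}+\epsilon]$ running from a concave corner up to the next convex corner, and slope $-1$ on the complementary intervals between a convex corner and the next concave corner. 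Because $\kery_i+\epsilon < z_- \leq \kerx_i+\epsilon$ and $\kery_j+\epsilon \leq z_+ < \kerx_j+\epsilon$, both endpoints $z_-$ and $z_+$ lie inside slope-$(-1)$ intervals, so no slope-$(+1)$ interval is truncated by an endpoint; the slope-$(+1)$ part of $[z_-,z_+]$ is therefore the disjoint union $\bigcup_{i\leq k<j}[\kerx_k+\epsilon,\kery_{k+1}+\epsilon]$, of total length $\sum_{i\leq k<j}(\kery_{k+1}-\kerx_k)$. Substituting into $L_+ = (z_+-z_-)-\epsilon$ yields \eqref{eq:kerov-centered}.

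The computation is entirely elementary; the only step needing care is the last one, where the corner inequalities $\kery_i+\epsilon < z_- \leq \kerx_i+\epsilon$ and $\kery_j+\epsilon \leq z_+ < \kerx_j+\epsilon$ (together with $z_-<z_+$) must be invoked to guarantee that exactly the up-slope intervals indexed by $i\leq k<j$ are fully contained in $[z_-,z_+]$ and that there is no fractional up-slope contribution at either end. A picture such as \cref{fig:three-zigzags-simplified} makes this bookkeeping transparent.
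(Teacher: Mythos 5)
Your proof is correct and is essentially the paper's argument written out in Russian coordinates: the paper computes the difference of the French $x$-coordinates of the two intersection points in two ways (once as $z_+-z_-$, once --- after translating the points back onto $\bigomega$, which accounts for the $-\epsilon$ --- as the total length of the horizontal, i.e.\ slope-$(+1)$, segments of the profile between them), which is exactly your pair of identities $L_++L_-=z_+-z_-$ and $L_+-L_-=\bigomega(z_+-\epsilon)-\bigomega(z_--\epsilon)$ combined with subtracting the defining equations of $z_\pm$. The endpoint bookkeeping via the corner inequalities, which you rightly single out as the only delicate step, is the part the paper leaves implicit in its appeal to the figure.
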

	\begin{proof}
		We will calculate the difference of the $x$-coordinates of the two black
		circles (the intersection points) in Figure~\ref{fig:three-zigzags} in two ways.
		
		On one hand, this difference is equal to the difference $z_+- z_-$ of their
		$u$-coordinates which is the left-hand side of \eqref{eq:kerov-centered}.
		
		On the other hand, the difference of the $x$-coordinates would
		decrease by~$\epsilon$ if we transform the intersection points as follows:
		\begin{itemize}
			\item we shift the left circle by the vector with the French coordinates
			$x=0$, $y=\epsilon$ (which is the opposite to the blue vector on
			Figure~\ref{fig:three-zigzags});
			
			\item we shift the right circle by the vector with the French coordinates
			$x=-\epsilon$, $y=0$ (which is the opposite to the red vector on
			Figure~\ref{fig:three-zigzags}).
		\end{itemize}
		The transformed  black circles both lie on the \conti diagram $\bigomega$.
		The difference of their $x$-coordinates is equal to the sum of the segments of
		$\bigomega$ which lie between the transformed circles and are parallel to the
		$x$-axis, see Figures~\ref{fig:three-zigzags} and \ref{fig:three-zigzags-simplified}, which is
		the second summand on the right-hand side of \eqref{eq:kerov-centered}.
	\end{proof}

	\subsubsection{The lower bound for the product $P$}

	\begin{lemma}
		\label{lem:lower-bound-for-P} 
		With the above notations, for each $z\geq z_+$ 
		\[ P(z) \geq P^{\min}_{z_+}(z),\]
		cf.~\eqref{eq:p-min}.
	\end{lemma}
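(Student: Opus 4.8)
The plan is to pass to reciprocals. First I would observe that for $z > z_+$ every linear factor in
\[ P(z) = \frac{z-z_+}{z-z_-}\prod_{i\le k<j}\frac{z-(\kerx_k+\epsilon)}{z-(\kery_{k+1}+\epsilon)} \]
is positive: indeed $z_- < z_+$ by \eqref{eq:kerov-centered}, and $\kerx_k+\epsilon < \kery_{k+1}+\epsilon \le \kery_j+\epsilon \le z_+$ for every $i\le k<j$ by the ordering of the corners of $\bigomega$ together with the bracketing of $z_+$ established above; likewise $P^{\min}_{z_+}(z) > 0$. Since at $z = z_+$ the inequality is trivial ($P^{\min}_{z_+}(z_+) = 0 \le P(z_+)$), it therefore suffices to prove
\[ \frac{1}{P(z)} \le \frac{1}{P^{\min}_{z_+}(z)} \qquad\text{for } z > z_+. \]

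The key step is to expand $1/P$ into partial fractions. As a quotient of two monic polynomials of equal degree $1+(j-i)$, it has the form
\[ \frac{1}{P(z)} = 1 + \sum_l \frac{s_l}{z-\pi_l}, \]
where the poles $\pi_l$ are among $\{z_+\}\cup\{\kerx_k+\epsilon : i\le k<j\}$, all of which are $\le z_+$. Two properties of this expansion are needed. First, $s_l \ge 0$ for every $l$: this is the classical consequence of interlacing, since the ordering of the corners of $\bigomega$ together with the brackets $\kery_i+\epsilon < z_- \le \kerx_i+\epsilon$ and $\kery_j+\epsilon \le z_+ < \kerx_j+\epsilon$ yields the alternation
\[ z_- \le \kerx_i+\epsilon < \kery_{i+1}+\epsilon < \kerx_{i+1}+\epsilon < \cdots < \kerx_{j-1}+\epsilon < \kery_j+\epsilon \le z_+, \]
in which the zeros $z_-,\kery_{i+1}+\epsilon,\dots,\kery_j+\epsilon$ of $1/P$ and its poles $\kerx_i+\epsilon,\dots,\kerx_{j-1}+\epsilon,z_+$ alternate, a zero being leftmost, forcing all residues to be positive (alternatively, one computes each residue directly from \eqref{eq:magic-product-Kerov} and inspects the signs of the factors). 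Second, $\sum_l s_l = \epsilon$: indeed $\sum_l s_l$ is the coefficient of $z^{-1}$ in the Laurent expansion of $1/P(z)-1$ at infinity, and expanding $\tfrac{z-z_-}{z-z_+}\prod_{i\le k<j}\tfrac{z-(\kery_{k+1}+\epsilon)}{z-(\kerx_k+\epsilon)}$ shows this coefficient equals $(z_+-z_-) - \sum_{i\le k<j}(\kery_{k+1}-\kerx_k)$, which is exactly $\epsilon$ by \eqref{eq:kerov-centered}.

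With these two facts the conclusion is immediate. For $z > z_+$ and any pole $\pi_l \le z_+$ one has $0 < z-z_+ \le z-\pi_l$, hence $\tfrac{1}{z-\pi_l} \le \tfrac{1}{z-z_+}$; multiplying by $s_l \ge 0$ and summing,
\[ \frac{1}{P(z)} - 1 = \sum_l \frac{s_l}{z-\pi_l} \le \frac{1}{z-z_+}\sum_l s_l = \frac{\epsilon}{z-z_+} = \frac{1}{P^{\min}_{z_+}(z)} - 1, \]
using \eqref{eq:p-min} in the last equality. This is the required inequality, and taking reciprocals of the positive quantities $P(z)$ and $P^{\min}_{z_+}(z)$ finishes the proof. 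The genuine content sits in the middle paragraph — identifying the poles of $1/P$, checking the interlacing that makes the residues positive, and carrying out the residue-at-infinity computation whose answer being precisely $\epsilon$ is exactly where \eqref{eq:kerov-centered} is used; everything else is bookkeeping.
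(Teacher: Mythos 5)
Your proof is correct, and it takes a genuinely different route from the paper's. The paper treats $P(z)$ as a function of the corner positions $\kerx_i\leq\kery_{i+1}\leq\cdots\leq\kery_j$ constrained by \eqref{eq:kerov-centered}, and runs a compactness-plus-perturbation argument: at a minimizing configuration the inequalities must saturate, the product telescopes, and the minimum is exactly $P^{\min}_{z_+}$. You instead work with $1/P$ directly: its zeros $z_-,\kery_{i+1}+\epsilon,\dots,\kery_j+\epsilon$ and poles $\kerx_i+\epsilon,\dots,\kerx_{j-1}+\epsilon,z_+$ interlace (zero leftmost), so all residues $s_l$ in the partial-fraction expansion are nonnegative; the residue at infinity gives $\sum_l s_l=(z_+-z_-)-\sum_{i\le k<j}(\kery_{k+1}-\kerx_k)=\epsilon$, which is precisely where \eqref{eq:kerov-centered} enters; and since every pole satisfies $\pi_l\le z_+$, the estimate $\sum_l s_l/(z-\pi_l)\le\epsilon/(z-z_+)$ yields $1/P(z)\le 1/P^{\min}_{z_+}(z)$ for $z>z_+$ (the case $z=z_+$ being trivial since both sides of the original inequality vanish). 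I checked the details: positivity of all factors for $z>z_+$ holds because $z_+-z_-\geq\epsilon>0$ and every corner in \eqref{eq:magic-product-Kerov} shifted by $\epsilon$ is at most $\kery_j+\epsilon\le z_+$; the degenerate cases (a zero colliding with a pole, or $i=j$, where equality $P=P^{\min}_{z_+}$ holds) are covered by allowing $s_l\geq 0$ rather than $s_l>0$. What your version buys is the elimination of the variational machinery (no compactness, no admissibility check for the perturbation) and a formulation that is thematically close to the rest of the paper, where Cauchy transforms are exactly such interlacing rational functions with positive residues; what the paper's version buys is an explicit identification of the extremal corner configuration, which makes the geometric meaning of the bound visible. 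Both hinge on \eqref{eq:kerov-centered} in the same essential way.
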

	\begin{proof}
		In order to find a lower bound for $P=P(z)$,
		we shall view it as a function of the variables 
		\begin{equation} 
			\label{eq:my-little-variables} 
			\kerx_i \leq \kery_{i+1} \leq \kerx_{i+1} \leq
			\kery_{i+2} \leq \cdots  \leq \kery_{j-1} \leq \kerx_{j-1} \leq \kery_{j}  
			\in [z_- - \epsilon ,\; z_+  -\epsilon]
		\end{equation}
		which are subject to the constraint \eqref{eq:kerov-centered}. Let the
		variables \eqref{eq:my-little-variables} be such that the minimal value of $P$
		is attained (such a minimum is attained by the compactness argument). Let us
		modify two of these variables by replacing~$\kerx_m$ by $\kerx_m+\tau$, as well
		as $\kery_{m+1}$ by $\kery_{m+1}+\tau$, where $\tau<0$ is very close to zero. 
		Since 
		\[
		P(z)= \frac{z-z_+}{z-z_-} \prod_{k=i}^{j-1} \left[ 1+ 
		\frac{\kery_{k+1}-\kerx_k}{z- (\kery_{k+1}+\epsilon)} \right]
		\] 
		is an increasing function of~$\tau$, the minimality of $P$ implies
		that such a modification should not be possible. As a consequence, the
		appropriate inequalities in \eqref{eq:my-little-variables} must be saturated
		and
		\[ \kerx_i= z_- - \epsilon, 
		\qquad\qquad 
		\kery_{i+1} = \kerx_{i+1}, 
		\qquad\qquad 
		\kery_{i+2} = \kerx_{i+2}, 
		\qquad\qquad 
		\dots, 
		\qquad\qquad
		\kery_{j-1} = \kerx_{j-1}. \]
		Additionally, by \eqref{eq:kerov-centered}, 
		\[ \kery_j =  z_+ -2 \epsilon.\]
		The product \eqref{eq:magic-product-Kerov} evaluated for this particular
		choice of variables exhibits numerous cancellations, resulting in an elegant
		formula for $P(z)$ as shown below.
		
		In this way we proved that for $z\geq z_+$ the product
		\eqref{eq:magic-product-Kerov} is bounded from below by
		\[ P^{\min}_{z_+}(z) = 
		\frac{z-z_+}{z+\epsilon-z_+} =
		1 - \frac{\epsilon}{z+\epsilon-z_+},
		\]
		as required.
	\end{proof}

	\subsubsection{Proof of Proposition~\ref{prop:tails-zigzag}}
	\label{sec:lower-bound-tail}

	\begin{proof}[Proof of Proposition~\ref{prop:tails-zigzag}]
		The atoms of the transition measure
		$\mu_{\referenceomega}$ (respectively, the atoms of $\mu_{\bigomega}$) are
		given by the residues of the rational function $\cauchy_{\referenceomega}$
		(respectively, $\cauchy_{\bigomega}$). We use \eqref{eq:cauchy-modified} in
		order to relate the residues of $\cauchy_{\referenceomega}$ to the residues to
		$\cauchy_{\bigomega}$. 
		Since the residues of these two functions coincide up to a shift by $\epsilon$,
		it follows that
		\begin{multline}
			\label{eq:comparison}
			\int_{-\infty}^\infty   \phi(z)  \dif\mu_{\referenceomega}(z)  
			\geq 
			\int_{z_+ }^\infty
			\phi(z)  \dif\mu_{\referenceomega}(z) 
			= 
			\int_{z_+ - \epsilon}^\infty
			\phi(z+\epsilon) \ P(z+\epsilon) \dif \mu_{\bigomega}(z) \\ \geq   
			\int_{-\infty}^\infty  \phi(z+\epsilon) \  P^{\min}_{z_+}(z+\epsilon) \dif \mu_{\bigomega}(z), 
		\end{multline}
		where the inequality follows from Lemma~\ref{lem:lower-bound-for-P}, as required.
	\end{proof}
	
	\subsection{Lower bound for the transition measure of $\referenceomega$, generic diagrams}

	The content of the following result parallels Proposition~\ref{prop:tails-zigzag} in content; 
	however, a critical distinction emerges in that we no longer require $\bigomega$ to satisfy 
	the zigzag condition. Furthermore, rather than employing a test function $\phi$, our analysis 
	directly addresses the distribution tail.
	\begin{proposition}
		\label{prop:tails} 
		Let $\epsilon>0$, $\bigomega$, $f$ and $z_+$ be as described
		in Sections~\ref{sec:input} and \ref{sec:intersection}, and let $\referenceomega=
		\referenceomega_{\Omega,f,\epsilon}$ be the corresponding shifted diagram. Then
		\[        \mu_{\referenceomega}\left( (z_+, \infty) \right)  \geq	 
		\int_{-\infty}^\infty  P_{z_+}^{\min}(z+\epsilon) \dif \mu_{\bigomega}(z), \]
		cf.~\eqref
		{eq:p-min}.
	\end{proposition}
	The remainder of this section presents our proof: we approximate $\bigomega$ 
	with zig-zag diagrams, apply Proposition~\ref{prop:tails-zigzag},
	and employ continuity arguments.

	\subsubsection{Approximation by zigzags}
	
	Let $(\Omega^{(n)})$ be a sequence of zigzag diagrams which converges pointwise to $\Omega$, and such that the \conti
	diagrams $(\Omega^{(n)})$ have a common compact support, i.e., there exists a
	constant $C$ such that 
	\begin{equation}
		\label{eq:common-compact}
		\Omega^{(n)}(z)=|z| \qquad \text{if $|z|>C$} 
	\end{equation} 
	holds for any integer $n\geq 1$. Such a sequence can be explicitly constructed
	similarly as in the proof of the general case of
	Proposition~\ref{prop:comparison-of-transition}. Since the transition measure provides a
	homeomorphism between appropriate topological spaces, it follows that the
	sequence of transition measures $\big( \mu_{\Omega^{(n)}} \big)$ converges
	weakly to the transition measure $\mu_{\Omega}$.

	\subsubsection{Asymptotics of $z_+^{(n)}$}
	
	The constructions from Sections~\ref{sec:intersection} and \ref{sec:reference-diagram} can be
	applied to $\Omega:= \Omega^{(n)}$. We denote by 
	$\referenceomega^{(n)}=\referenceomega_{\bigomega^{(n)},\epsilon,f}$ 
	and $z_+^{(n)}$ and
	the
	corresponding values of $\referenceomega$ and $z_+$.  
	Let 
	\begin{equation}
		\label{eq:limsup}
		z_+^{(\infty)} := \limsup_{n\to\infty} z_+^{(n)}.
	\end{equation}
	
	\begin{lemma}
		\label{lem:zinfty}
		With the above notations,
		\begin{equation}
			\label{eq:zinfty}
			z_- + \epsilon \leq	 z_+^{(\infty)} \leq  z_+. 
		\end{equation}
	\end{lemma}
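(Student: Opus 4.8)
The plan is to prove the two inequalities in \eqref{eq:zinfty} separately, using the definitions of $z_-$, $z_+$, $z_+^{(n)}$ as solutions of the fixed-point equations \eqref{eq:zminus}, \eqref{eq:zetplus} and the pointwise convergence $\bigomega^{(n)}\to\bigomega$. Throughout I write $g_-(z)=\bigomega(z-\epsilon)-\epsilon$ and $g_+(z)=\bigomega(z-\epsilon)+\epsilon$ for the blue and red curves associated with $\bigomega$, and similarly $g_-^{(n)}$, $g_+^{(n)}$ for those associated with $\bigomega^{(n)}$; since every $\bigomega^{(m)}$ and $\bigomega$ is a \conti diagram, all these functions are $1$-Lipschitz, while $f$ has slope $1$ and $f(z)-z$ is a constant. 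The common compact support \eqref{eq:common-compact} guarantees that all the intersection points in question lie in a fixed bounded region, so no escape to infinity occurs.

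For the upper bound $z_+^{(\infty)}\le z_+$: by definition $f(z_+^{(n)})=g_+^{(n)}(z_+^{(n)})$, i.e.\ $\bigomega^{(n)}(z_+^{(n)}-\epsilon)=f(z_+^{(n)})-\epsilon$. Pass to a subsequence along which $z_+^{(n)}\to z_+^{(\infty)}$. Pointwise convergence gives $\bigomega^{(n)}(z_+^{(\infty)}-\epsilon)\to\bigomega(z_+^{(\infty)}-\epsilon)$; combined with equi-Lipschitzness of $(\bigomega^{(n)})$ this upgrades to $\bigomega^{(n)}(z_+^{(n)}-\epsilon)\to\bigomega(z_+^{(\infty)}-\epsilon)$ (the standard "continuous convergence" argument: $|\bigomega^{(n)}(z_+^{(n)}-\epsilon)-\bigomega^{(n)}(z_+^{(\infty)}-\epsilon)|\le|z_+^{(n)}-z_+^{(\infty)}|\to0$). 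Taking limits in the fixed-point equation yields $\bigomega(z_+^{(\infty)}-\epsilon)=f(z_+^{(\infty)})-\epsilon$, i.e.\ $z_+^{(\infty)}$ solves \eqref{eq:zetplus} with $\bigomega$. Since $z_+$ is by definition the \emph{maximal} such solution, $z_+^{(\infty)}\le z_+$.

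For the lower bound $z_-+\epsilon\le z_+^{(\infty)}$: one shows that for every $n$ (at least for $n$ large) one has $z_+^{(n)}\ge z_-+\epsilon$, and then passes to the $\limsup$. The key geometric fact is that $z_+^{(n)}$ is the largest point where $f$ meets the red curve $g_+^{(n)}$, whereas at the point $z_-$ the line $f$ meets (or lies above/at) the blue curve $g_-$ of the original diagram; since $g_+^{(n)}(z)=g_-^{(n)}(z)+2\epsilon$ and $\bigomega^{(n)}$ approximates $\bigomega$ from below by construction, at $z=z_-+\epsilon$ one can compare $f(z_-+\epsilon)=f(z_-)+\epsilon=g_-(z_-)+\epsilon$ against $g_+^{(n)}(z_-+\epsilon)=\bigomega^{(n)}(z_-)+\epsilon$, and $1$-Lipschitz bounds plus $\bigomega^{(n)}\le\bigomega$ force $f(z_-+\epsilon)\le g_+^{(n)}(z_-+\epsilon)+o(1)$. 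Because $f$ has slope $1$ and $g_+^{(n)}$ is $1$-Lipschitz, once $f$ is below (or within $o(1)$ of) $g_+^{(n)}$ at $z_-+\epsilon$ and $f=g_+^{(n)}$ far to the right (both equal $z$ eventually, up to the constant shift—actually $f(z)-z=b>0$ while $g_+^{(n)}(z)=|z|+\epsilon=z+\epsilon$ for large $z$, so $f$ sits strictly below $g_+^{(n)}$ there), the largest crossing point $z_+^{(n)}$ must satisfy $z_+^{(n)}\ge z_-+\epsilon-o(1)$, and taking $\limsup$ kills the $o(1)$.

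The main obstacle will be the lower bound: one must be careful that $f$ need not literally cross $g_+^{(n)}$ transversally (slopes can coincide on intervals where $\bigomega$ is linear with slope $1$), so the argument should be phrased purely in terms of inequalities — "the set $\{z: f(z)\ge g_+^{(n)}(z)\}$ contains a point $\ge z_-+\epsilon-o(1)$ and is bounded above, hence its maximum, which is $z_+^{(n)}$, satisfies the bound" — rather than in terms of crossings. The other subtlety, already handled above, is converting pointwise convergence $\bigomega^{(n)}\to\bigomega$ into convergence of the fixed points; this is routine given the uniform $1$-Lipschitz bound and the compact support \eqref{eq:common-compact}, but it does need to be invoked explicitly in both halves of the proof.
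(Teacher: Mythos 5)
Your argument for the second inequality $z_+^{(\infty)}\le z_+$ is correct and is essentially the paper's: pass to a subsequence realizing the $\limsup$, use pointwise convergence plus the uniform $1$-Lipschitz bound to pass to the limit in $f\big(z_+^{(n)}\big)=\bigomega^{(n)}\big(z_+^{(n)}-\epsilon\big)+\epsilon$, conclude that $z_+^{(\infty)}$ solves \eqref{eq:zetplus} for $\bigomega$, and invoke the maximality of $z_+$.

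The lower bound is where you diverge from the paper, and your route has a genuine gap. First, a computational slip: for large $z>0$ one has $g_+^{(n)}(z)=\bigomega^{(n)}(z-\epsilon)+\epsilon=|z-\epsilon|+\epsilon=z$, so $f(z)=z+b$ lies strictly \emph{above} the red curve at $+\infty$, not below it (this still guarantees that the crossing set is nonempty and bounded, since $f-g_+^{(n)}$ is weakly increasing, tends to $-\infty$ at $-\infty$ and to $b>0$ at $+\infty$). Second, and more seriously, the inference from ``$f\le g_+^{(n)}+o(1)$ at $z_-+\epsilon$'' to ``$z_+^{(n)}\ge z_-+\epsilon-o(1)$'' is invalid: the difference $h=f-g_+^{(n)}$ has derivative $1-(\bigomega^{(n)})'(\,\cdot\,-\epsilon)\in\{0,2\}$, so it can sit at an arbitrarily small positive value over an arbitrarily long interval (wherever $\bigomega^{(n)}$ has slope $+1$); knowing $h(z_-+\epsilon)\le\eta_n$ with $\eta_n>0$ therefore puts no bound on how far to the left the last zero $z_+^{(n)}$ of $h$ lies. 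So your per-$n$ estimate does not close. The fix is the paper's observation, which avoids any $n$-dependent geometry: you have already shown that $z_+^{(\infty)}$ is a solution of \eqref{eq:zetplus} for $\bigomega$, and for \emph{any} solution $w$ of \eqref{eq:zetplus} and any solution $v$ of \eqref{eq:zminus} one has
\[ w-v=f(w)-f(v)=\bigomega(w-\epsilon)-\bigomega(v-\epsilon)+2\epsilon\ \ge\ -|w-v|+2\epsilon, \]
which forces $w\ge v+\epsilon$; applying this with $w=z_+^{(\infty)}$ and $v=z_-$ gives the first inequality in \eqref{eq:zinfty} directly.
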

	\begin{proof}
		In the context of  $\Omega:= \Omega^{(n)}$, 
		the defining property of $z_+^{(n)}$, i.e., equation 
		\eqref{eq:zetplus}, takes the form
		\[	f\big(z_+^{(n)}\big) = \bigomega^{(n)}\big(z_+^{(n)} -\epsilon\big) + \epsilon.  \]
		The $1$-Lipschitz continuity of all relevant functions implies that 
		\[	f\big(z_+^{(\infty)}\big) = \bigomega\big(z_+^{(\infty)} -\epsilon\big) + \epsilon.  \]
		In other words, $ z_+^{(\infty)}$ is a solution of \eqref{eq:zetplus}; by the
		maximality of $z_+$, the second inequality in \eqref{eq:zinfty} follows.
		
		\medskip

		On the other hand, any solution of the equation \eqref{eq:zetplus} is bigger by
		at least~$\epsilon$ than any solution of \eqref{eq:zminus}. The first inequality
		in \eqref{eq:zinfty} follows.
	\end{proof}

	\subsubsection{Asymptotics of\/ $\referenceomega^{(n)}$}

	From the alternative definition \eqref{eq:minmax} of the shifted diagram it follows that the sequence
	of shifted diagrams $\big( \referenceomega^{(n)} \big)$ has a common compact
	support and converges pointwise to the shifted diagram $\referenceomega$. As a
	consequence, the sequence of their transition measures $\big(\mu_{
		\referenceomega^{(n)} } \big)$ converges to $\mu_{\referenceomega}$ in the weak
	topology of probability measures.

	\subsubsection{Applying Proposition~\ref{prop:tails-zigzag} to zigzag diagrams}
	
	Let $\phi_k \colon \R \to \R_+$ be a continuous function which will be specified later. 
	Proposition~\ref{prop:tails-zigzag} implies that
	\begin{equation}
		\label{eq:zigzagsok}    
		\int_{-\infty}^\infty   \phi_k(z)  \dif\mu_{\referenceomega^{(n)}}(z)  \geq	 
		\int_{-\infty}^\infty  \phi_k(z+\epsilon)\ P_{z^{(n)}_+}^{\min}(z+\epsilon) 
		\dif \mu_{\bigomega^{(n)}}(z)
	\end{equation}
	holds true for each $n$. 

	\medskip
	
	Lemma~\ref{lem:zinfty} implies that for each $\delta>0$ 
	\[ 
	z_+^{(n)} \leq \delta+ z_+\]
	holds true for all $n\geq n_0$ for some value of $n_0$. 
	Since for each fixed value of
	$z$, the map $t \mapsto P_{t}^{\min}(z)$ is weakly decreasing, this implies that
	the right hand side of \eqref{eq:zigzagsok} can be bounded from below as follows 
	\begin{equation}
		\label{eq:rhs}
		\int_{-\infty}^\infty  \phi_k(z+\epsilon)\  P_{z_+^{(n)}}^{\min}(z+\epsilon) \dif \mu_{\bigomega^{(n)}}(z) \geq 
		\int_{-\infty}^\infty  \phi_k(z+\epsilon)\  P_{\delta+ z_+}^{\min}(z+\epsilon) \dif \mu_{\bigomega^{(n)}}(z)
	\end{equation}
	for $n\geq n_0$.

	We chain the inequalities \eqref{eq:zigzagsok} and \eqref{eq:rhs} and take
	iterated limit: first $n\to\infty$ and then $\delta\to 0$ and get:
	\begin{equation}
		\label{eq:phiphiphi}
		\int_{-\infty}^\infty   \phi_k(z)  \dif\mu_{\referenceomega}(z) \geq 
		\int_{-\infty}^\infty  \phi_k(z + \epsilon)\  P_{ z_+}^{\min}(z+\epsilon) \dif \mu_{\bigomega}(z).
	\end{equation}
	The passage to the limit as $n\to\infty$ is justified by the weak convergence of the transition measures
	of $\mu_{\referenceomega^{(n)}}$ to the transition measure of $\referenceomega$, coupled with
	the analogous weak convergence of the transition measures $\bigomega^{(n)}$ to the transition measure
	of $\bigomega$.

	\subsubsection{Approximating the step function}
	For an integer $k\geq 1$ we will specify now the function $\phi_k$ as follows:
	\[ \phi_k(z_+ + \Delta) = \begin{cases}
		0   & \text{for $\Delta\leq 0$}, \\
		k\Delta  & \text{for $0\leq \Delta \leq \frac{1}{k}$}, \\
		1   & \text{for $\Delta> \frac{1}{k}$}.
	\end{cases}
	\]
	In this way $(\phi_k)$ is a sequence of continuous functions which converges pointwise to 
	\[ \phi(z) = \begin{cases}
		0   & \text{for $z\leq z_+$}, \\
		1   & \text{for $z> z_+$}.
	\end{cases}
	\]
	We consider \eqref{eq:phiphiphi} for this specific choice of $\phi_k$ and take
	the limit $k\to\infty$:
	\[
	\mu_{\referenceomega}\big( (z_+, \infty) \big) \geq 
	\int_{-\infty}^\infty P_{ z_+}^{\min}(z+\epsilon) \dif \mu_{\bigomega}(z),
	\]
	which completes the proof of Proposition~\ref{prop:tails}.

	\section{The bounds for the cumulative function}
	\label{sec:bounds}
	
	\subsection{The upper bound}
	\begin{theorem}
		\label{theo:module-continuity-A}
		Let $\bigomega\colon \R \to \R_+$ be  a \conti diagram and let $\epsilon>0$
		and $z_0\in\R$  be fixed. Assume that $\zzeroplus$ is the maximal solution
		of the equation
		\begin{equation} 
			\label{eq:equation-for-Kerov}
			\bigomega(\zzeroplus-\epsilon) - \bigomega(z_0-\epsilon)= \zzeroplus - z_0 - 2 \epsilon
		\end{equation}
		see Figure~\ref{fig:what-is-zzero} for an illustration.
		
		\smallskip
		
		Then for any \conti diagram $\smallomega$ such that 
		$\dxy(\bigomega, \smallomega) \leq
		\epsilon$ the following upper bound for the cumulative
		function of $\smallomega$ holds true:
		\begin{multline}
			\label{eq:bound} 
			\lim_{\tau \to 0^+}  K_{\smallomega}(z_0-\tau)  \leq  1- \int_{\zzeroplus-\epsilon}^{\infty}   
			\left[ 1- \frac{\epsilon}{z+2\epsilon- \zzeroplus}\right]  \dif \mu_{\bigomega}(z)
			 \\ =
			\int_{-\infty}^{\infty}
			\left( \mathbbm{1}_{(-\infty,\ \zzeroplus-\epsilon)}(z) + \mathbbm{1}_{[\zzeroplus-\epsilon,\ \infty)}(z)\  \frac{\epsilon}{z+2\epsilon- \zzeroplus} \right) \dif \mu_{\bigomega}(z).
		\end{multline}
	\end{theorem}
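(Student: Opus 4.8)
The plan is to deduce the estimate from the extremal construction of \cref{sec:shifted,sec:shifted-measure}. The point is that, among all \conti diagrams $\smallomega$ with $d(\bigomega,\smallomega)\le\epsilon$, the one maximizing $\lim_{\tau\to0^+}K_{\smallomega}(z_0-\tau)$ is captured (through the comparison of \cref{prop:steepest}) by an $\epsilon$-shift of $\bigomega$ along a suitable affine function of slope $1$, and for such a shifted diagram \cref{lem:tails} controls the tail of its transition measure from below. Everything then reduces to choosing the affine function correctly and comparing the cutoff $z_+$ it produces with $\zzeroplus$.

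First I would note that the hypothesis of the theorem already forces $\smallomega(z_0)>z_0$. Write $h(z):=\bigomega(z-\epsilon)-z$; it is weakly decreasing since $|\bigomega'|\le1$, and $h(z)=-\epsilon$ for all sufficiently large $z$ because $\bigomega$ is a \conti diagram. Equation \eqref{eq:equation-for-Kerov} reads $h(z)=h(z_0)-2\epsilon$, and for it to admit a finite maximal solution one must have $h(z_0)-2\epsilon>-\epsilon$, i.e.\ $\bigomega(z_0-\epsilon)>z_0+\epsilon$; the blue bound \eqref{eq:blue-bound} at $z_0$ then gives $\smallomega(z_0)\ge\bigomega(z_0-\epsilon)-\epsilon>z_0$. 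Hence the affine function $f(z)=z-z_0+\smallomega(z_0)$ is of the form $f(x)=x+b$ with $b=\smallomega(z_0)-z_0>0$, so it fits the setup of \cref{sec:input}, and it is exactly the function appearing in \cref{prop:steepest}. Let $\referenceomega=\referenceomega_{\bigomega,\epsilon,f}$ be the $\epsilon$-shifted diagram and let $z_+$ be the number from \cref{sec:intersection}. By \cref{prop:steepest},
\[ \lim_{\tau\to0^+}K_{\smallomega}(z_0-\tau)\le K_{\referenceomega}(z_0). \]

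Next I would check that $z_0\le z_+$. The red bound \eqref{eq:red-bound} at $z_0$ gives $f(z_0)=\smallomega(z_0)\le\bigomega(z_0-\epsilon)+\epsilon$, whereas for all large $z$ one has $\bigomega(z-\epsilon)+\epsilon=z<f(z)$, so the defining equation \eqref{eq:zetplus} of $z_+$ has a solution in $[z_0,\infty)$ and therefore its maximal solution satisfies $z_+\ge z_0$. Consequently $(z_+,\infty)\subseteq(z_0,\infty)$, and \cref{lem:tails} yields
\[ K_{\referenceomega}(z_0)=1-\mu_{\referenceomega}\big((z_0,\infty)\big)\le 1-\mu_{\referenceomega}\big((z_+,\infty)\big)\le 1-\int_{-\infty}^{\infty}P_{z_+}^{\min}(z+\epsilon)\dif\mu_{\bigomega}(z). \]
It remains to replace $z_+$ by $\zzeroplus$. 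Rewriting \eqref{eq:zetplus} as $h(z_+)=\smallomega(z_0)-z_0-\epsilon$ and \eqref{eq:equation-for-Kerov} as $h(\zzeroplus)=\bigomega(z_0-\epsilon)-z_0-2\epsilon$, the blue bound $\smallomega(z_0)\ge\bigomega(z_0-\epsilon)-\epsilon$ gives $h(z_+)\ge h(\zzeroplus)$; since $h$ is weakly decreasing and $z_+,\zzeroplus$ are the \emph{maximal} preimages of their respective values, this forces $z_+\le\zzeroplus$. As $t\mapsto P_t^{\min}(\zeta)$ is weakly decreasing for each fixed $\zeta$ (cf.\ \eqref{eq:p-min}), we get $P_{z_+}^{\min}(z+\epsilon)\ge P_{\zzeroplus}^{\min}(z+\epsilon)$ pointwise, and combining with the previous display,
\[ \lim_{\tau\to0^+}K_{\smallomega}(z_0-\tau)\le 1-\int_{-\infty}^{\infty}P_{\zzeroplus}^{\min}(z+\epsilon)\dif\mu_{\bigomega}(z). \]
Substituting $P_{\zzeroplus}^{\min}(z+\epsilon)=1-\frac{\epsilon}{z+2\epsilon-\zzeroplus}$ for $z\ge\zzeroplus-\epsilon$ and $P_{\zzeroplus}^{\min}(z+\epsilon)=0$ otherwise gives the first expression in \eqref{eq:bound}, and the second follows by adding and subtracting $\int\mathbbm{1}_{[\zzeroplus-\epsilon,\infty)}\dif\mu_{\bigomega}$ and using $\mu_{\bigomega}(\R)=1$.

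Every individual step rests on results established earlier in the paper, so I do not expect a serious obstacle. The two places that need genuine attention are the reduction to the non‑degenerate regime $\smallomega(z_0)>z_0$ (needed so that the shift construction of \cref{sec:input} and \cref{prop:steepest} apply) and the order comparison $z_+\le\zzeroplus$; both of them hinge on nothing more than reading off the blue and red bounds \eqref{eq:blue-bound}, \eqref{eq:red-bound} at the single point $z_0$ and exploiting the monotonicity of $h$ and of $t\mapsto P_t^{\min}$.
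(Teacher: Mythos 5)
Your proof is correct and follows essentially the same route as the paper: apply \cref{prop:steepest} with the affine function through $(z_0,\smallomega(z_0))$, establish $z_0\le z_+\le\zzeroplus$, and conclude via \cref{lem:tails} together with the monotonicity of $t\mapsto P_t^{\min}$. Your reformulations via the weakly decreasing function $h(z)=\bigomega(z-\epsilon)-z$ (both for excluding the degenerate case $\smallomega(z_0)=z_0$ and for the comparison $z_+\le\zzeroplus$) are clean equivalents of the paper's ``minimal $v$-intercept'' argument.
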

	
	Note that equation \eqref{eq:equation-for-Kerov} may not have any solutions, or
	the set of solutions may be unbounded from above; the above result does not
	concern such cases. Since $\Omega$ is $1$-Lipschitz, it follows that each
	solution of the equation \eqref{eq:equation-for-Kerov} fulfills 
	\[ \zzeroplus \geq z_0+ \epsilon,\]
	see Figure~\ref{fig:what-is-zzero}. 
	
	The integrand on the right-hand side of
	\eqref{eq:bound} can be heuristically interpreted as a continuous, smoothed-out
	version of the indicator function $\mathbbm{1}_{(-\infty, z_0]}$.
	
	Some readers may find it aesthetically more pleasing to replace in
	\eqref{eq:equation-for-Kerov} and \eqref{eq:bound} the quantity $\zzeroplus$ by
	$\hat{z}:= \zzeroplus-\epsilon$.
	
	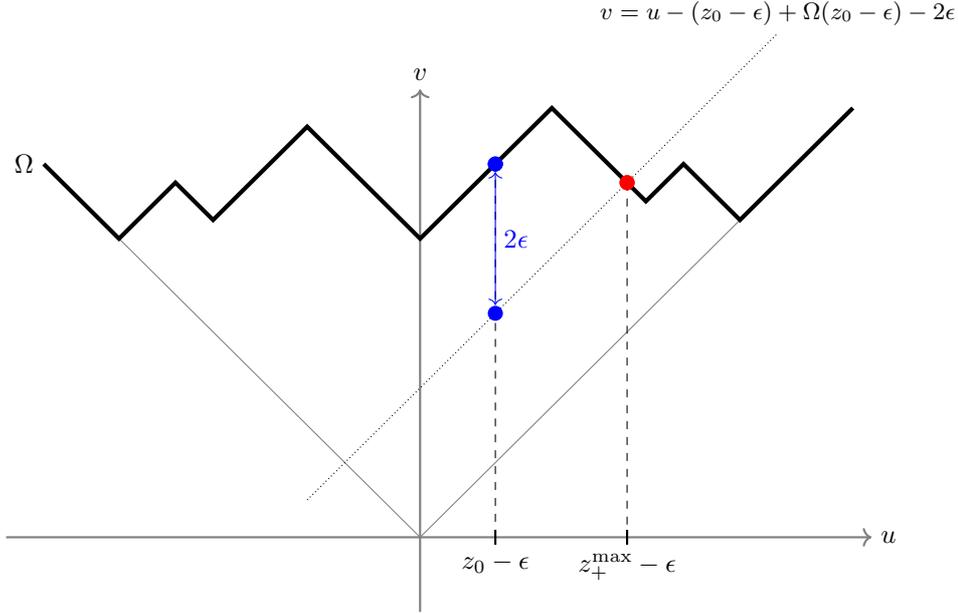
\begin{figure}
		\begin{tikzpicture}[scale=0.35, rotate=45]
			
			\node[fill,blue, circle, inner sep=2pt] (a1) at (12,8) {};
			\node[fill,red, circle, inner sep=2pt] (c1) at (15,4) {};

			\draw[->, thick, black!50]  (-11,11) -- (12,-12) node[anchor=
			west,black]{$u$};
			\draw[->, thick, black!50]  (-2,-2) -- (12,12) node[anchor=
			south,black]{$v$};
			\draw[thin, black!50] (0,20) -- (0,0) -- (20,0);

			\draw[dashed] (12,8) -- (2,-2) ;
			\draw[thick] (2,-2) +(0.2,0.2) -- +(-0.2,-0.2) node[anchor=north]
			{$z_0-\epsilon$};
			
			\draw[dashed] (15,4) -- (5.5,-5.5) ;
			\draw[thick] (5.5,-5.5) +(0.2,0.2) -- +(-0.2,-0.2) node[anchor=north]
			{$\zzeroplus- \epsilon$};

			\draw[densely dotted] (8,4) +(-10,0) -- +(15,0) 
			node[anchor=south] {\small$v= u-(z_0-\epsilon) + \bigomega(z_0-\epsilon)-
				2\epsilon $};

			\draw[ultra thick] (0,20) node[anchor=east]
			{\textcolor{black}{$\bigomega$}}
			-- ++(0,-4) 
			-- ++(3,0) -- ++(0, -2) 
			-- ++(5,0) -- ++(0, -6) 
			-- ++(7,0) -- ++(0, -5) 
			-- ++(2,0) -- ++(0, -3) 
			-- ++(6,0)  ;
			; 
			
			\node[fill,blue,opacity=0.2, circle, inner sep=2pt] (a) at (12,8) {};
			\node[fill,blue, circle, inner sep=2pt] (b) at (8,4) {};
			\node[fill,red,opacity=0.2, circle, inner sep=2pt] (c) at (15,4) {};
			\draw[<->,blue] (a) -- (b) node[midway, right] {$2 \epsilon$};

		\end{tikzpicture}
		\caption{The solid black zigzag line is the \conti diagram $\bigomega$.
			The densely dotted diagonal line is the plot of the affine function $z \mapsto 
			z -(z_0-\epsilon) + \bigomega(z_0-\epsilon)- 2\epsilon $. The rightmost, red
			circle indicates the intersection of this line with the plot of $\bigomega$.}
		\label{fig:what-is-zzero}
		
	\end{figure}
	
	\begin{proof}
		Our strategy will be to apply Proposition~\ref{prop:steepest}. We revisit the setup considered
		in Sections~\ref{sec:input}--\ref{sec:intersection}. Let the affine function $f$ be
		given by \eqref{eq:f-for-z0} and, as before, let $z_+$ be the maximal solution
		of \eqref{eq:zetplus}.
		In order to
		find an upper bound for $z_+$ we notice that such a maximal value corresponds
		to the affine function $f$ with the smallest possible $v$-intercept, see Figure~\ref{fig:three-zigzags}.
		
		The inequality \eqref{eq:blue-bound} taken at $z=z_0$ implies that the affine
		function~$f$ is bounded from below by
		\[  f_{\min}(z) = z - z_0 + \bigomega(z_0-\epsilon) - \epsilon.\]
		With these notations, \eqref{eq:equation-for-Kerov} can be rewritten in the form
		\begin{equation}
			\label{eq:rewritten}
			f_{\min}( \zzeroplus) = \bigomega(\zzeroplus -\epsilon) + \epsilon.   
		\end{equation}
		By comparing it with the defining property \eqref{eq:zetplus}
		it follows that $\zzeroplus$ is a special case of~$z_+$ evaluated for the minimal affine function $f_{\min}$.	
		In this way we proved that $z_+ \leq \zzeroplus$.

		Furthermore, the degenerate case $\omega(z_0) = z_0$ is not possible because it
		would imply that $f_{\min}(z) \leq f(z) = z$ lies below the $x$-axis. As a
		consequence, the set of solutions of \eqref{eq:rewritten} (which is equivalent
		to \eqref{eq:equation-for-Kerov}) would be empty or not bounded from above which
		contradicts the assumption on $\zzeroplus$. It follows that without the loss of generality we
		may assume $\omega(z_0) > z_0$ and Proposition~\ref{prop:steepest} is applicable.
		
		\medskip	
		
		We apply Proposition~\ref{prop:steepest}; we denote by $\referenceomega$ the shifted
		diagram which is defined there. The left limit of the cumulative function of $\omega$ is bounded by
		\begin{equation}
			\label{eq:c1}
			\lim_{\tau \to 0^+} K_{\omega}(z_0 - \tau) \leq
			K_{\referenceomega}(z_0)= 
			1- \mu_{\referenceomega}\big( (z_0,\infty)  \big).
		\end{equation}

		We will use the notations from Figure~\ref{fig:three-zigzags}. Let $P$ be the point on
		the plane which has the Russian coordinates $u= z_0$ and $v=\omega(z_0)$. From
		the way the affine function $f$ was defined in Proposition~\ref{prop:steepest} it follows
		that $P$ belongs both to the plot of the function $\omega$ as well as to the
		plot of the affine function~$f$. We can rephrase it as follows: $P$ belongs to
		the intersection of the area between the red and the blue curve (cf.~Section~\ref{sec:red-and-blue}) and the plot of $f$. It follows immediately that 
		\[z_0 \leq z_+,\]
		hence
		\begin{equation}
			\label{eq:c2}
			\mu_{\referenceomega}\big( (z_0,\infty)  \big)  \geq
			\mu_{\referenceomega}\big( (z_+,\infty)  \big).
		\end{equation}
		
		The right-hand side can be bounded from below by Proposition~\ref{prop:tails}.
		Since for each fixed value of
		$z$, the map $t \mapsto P_{t}^{\min}(z)$ is weakly decreasing, it follows that 
		\begin{equation}
			\label{eq:c3}
			\mu_{\referenceomega}\big( (z_+,\infty)  \big) \geq 
			\int_{-\infty}^\infty  P_{(\zzeroplus)}^{\min}(z+\epsilon) \dif \mu_{\bigomega}(z).
		\end{equation}
		
		We complete the proof by combining the inequalities provided by \eqref{eq:c1}, \eqref{eq:c2} and \eqref{eq:c3}.
	\end{proof}

	\subsection{Transpose of a diagram}
	
	If $\omega\colon \R\to \R_+$ is a \conti diagram we define its \emph{transpose}
	\[ \omega^T(z) = \omega(-z).\]
	This definition extends the usual notion of a transposed (or \emph{conjugate}) Young diagram. 
	
	It is easy to check that the transition measure of a transposed diagram
	$\mu_{(\omega^T)}$ is the push-forward of the transition measure of the original
	diagram $\mu_{\omega}$ under the involution $\R \ni z \mapsto -z \in \R$.

	\subsection{The lower bound}
	
	The following is a mirror image of Theorem~\ref{theo:module-continuity-A}.
	
	\begin{theorem}
		\label{theo:module-continuity-B} 
		
		Let $\bigomega\colon \R \to \R_+$ be  a \conti diagram and let $\epsilon>0$
		and $z_0\in\R$  be fixed. Assume that $\zzerominus$ is the minimal solution of
		the equation
		\[ 		\bigomega(\zzerominus + \epsilon) - \bigomega(z_0 + \epsilon)= z_0 - \zzerominus  - 2 \epsilon
		. \]

		Then for any \conti diagram $\smallomega$ such that 
		$\dxy(\bigomega, \smallomega) \leq 
		\epsilon$ the following lower bound for the cumulative 
		function of $\smallomega$ holds true:
		\[
			K_{\smallomega}(z_0)  \geq  
			\int_{-\infty}^{\zzerominus+ \epsilon} \left[ 1- \frac{\epsilon}{\zzerominus + 2 \epsilon- z } \right] \dif
			\mu_{{\bigomega}}(z).\]
	\end{theorem}
	\begin{proof}
		Consider the involution of the real line  $z \mapsto -z$
		and apply 
		Theorem~\ref{theo:module-continuity-A} for transposed diagrams 
		\[\omega':= \omega^T, \qquad\qquad \Omega':= \Omega^T,\] 
		and for 
		\[ z_0' := - z_0, \qquad\qquad \zzeroplus{}':= -\zzerominus . \qedhere \]	
	\end{proof}

	\section{Proof of the main result, Theorem~\ref{theo:main-modulus}}
	\label{sec:proof-of-main-result}
	
	\begin{lemma}
		\label{lem:zzerozero}
		We keep the assumptions from Theorem~\ref{theo:main-modulus}. 		
		There exists a constant $\epsilon_0>0$ with the property that for each $z\in
		[a_0,b_0]$ and for each $\epsilon \in (0,\epsilon_0)$
		the quantity $\zzeroplus$ considered in Theorem~\ref{theo:module-continuity-A}
		 is well-defined, and fulfills the inequalities
		\[  \zzeroplus  - z_0 \leq \frac{2\epsilon}{\delta} \qquad \text{and} \qquad \zzeroplus \leq \frac{b_0+b}{2}. \]
	\end{lemma}
	\begin{proof}
		Let us fix $z_0\in[a_0,b_0]$ and $\epsilon>0$. We consider a continuous, weakly decreasing
		function
		\[	F(z) = \bigomega(z-\epsilon)- z - \bigomega(z_0-\epsilon) + z_0 + 2\epsilon;
		\]
		the motivation for this definition is that the condition \eqref{eq:equation-for-Kerov} is
		equivalent to $F(\zzeroplus)=0$. 
		We start with the observation that $F(z_0)= 2 \epsilon >0$ is strictly positive.
		
		Assume that $\epsilon \in (0,\ a_0 - a)$ is sufficiently small. 
		The assumption \ref{enum:Lipschitz} of Theorem~\ref{theo:main-modulus} 
		implies that if $z\in[z_0,  b]$ then 
		\begin{equation}
			\label{eq:lipschitz-trick} F(z) \leq F(z_0)  -(z-z_0)\ \delta = 2\epsilon 
			- (z-z_0)\  \delta.
		\end{equation}
		We denote by 
		\[ z_{\max}:=z_0 + \frac{2 \epsilon}{\delta}  \]
		the value of the variable $z$ for which the right-hand side of
		\eqref{eq:lipschitz-trick} is equal to zero. Note that for $z>z_{\max}$ this right hand side is strictly negative.
		We also define 
		\[ \epsilon_0 = \min\left( \frac{(b-b_0)\ \delta }{4}, \  a_0-a  \right).\]
		In this way, if $0< \epsilon < \epsilon_0$ then 
		\[ z_{\max} < z_0+ \frac{b- b_0}{2} \leq \frac{b_0+b}{2}<b.\]  
		As a consequence, \eqref{eq:lipschitz-trick}  is applicable for $z:= z_{\max}$
		and $F(z_{\max}) \leq 0$. Furthermore, $F$ is strictly negative on the interval
		$(z_{\max},\infty)$.
		As a consequence, the solutions of the equation $F(z)=0$ form a non-empty, bounded set, so $\zzeroplus$ is well-defined, and it fulfills the bound
		$\zzeroplus\leq z_{\max}$ which completes the proof.
	\end{proof}
	
	\begin{proof}[Proof of Theorem~\ref{theo:main-modulus}]
		Let $\epsilon_0$ be the value provided by 
		Lemma~\ref{lem:zzerozero}. Without loss of generality we may assume that $0< \epsilon_0 <\frac{1}{2}$.
		
		\smallskip
		
		We start with the case  $\epsilon\in (0, \epsilon_0)$.
		Theorem~\ref{theo:module-continuity-A} implies that for any $z_0\in [a_0,b_0]$
		\begin{multline}
			\label{eq:darowanej-nierownosci-nie-patrzy-sie-w-zeby}
			\lim_{\tau \to 0^+} K_{\smallomega}(z_0-\tau) -	K_{\bigomega}(z_0) \\ \leq  
			\mu_{\bigomega}\big( (z_0, \zzeroplus - \epsilon] \big) +
			\epsilon \int_{\zzeroplus - \epsilon}^b \frac{1}{z  + 2\epsilon - \zzeroplus} \dif
			\mu_{{\bigomega}}(z) 
			+ 
			\epsilon \int_{b}^\infty \frac{1}{z  + 2\epsilon - \zzeroplus} \dif
			\mu_{{\bigomega}}(z).		
		\end{multline}

		We denote by $\rho_{\max}$ any upper bound for the density of the measure
$\mu_\bigomega$ on the interval $[a,b]$. 
		By the first inequality provided by Lemma~\ref{lem:zzerozero}, 
		the first summand on the right hand side of
		\eqref{eq:darowanej-nierownosci-nie-patrzy-sie-w-zeby} is bounded by
		\begin{equation}
			\label{eq:summand-1}
					 \frac{2 \epsilon \rho_{\max}}{\delta}  .
		\end{equation}
		The second summand is bounded from above by
		\begin{equation}
			\label{eq:summand-2} 
			\epsilon \rho_{\max} \log \frac{b+\epsilon_0 -a_0}{\epsilon}.
		\end{equation}
		The third summand is bounded from above by
		\begin{equation}
			\label{eq:summand-3} \frac{2 \epsilon}{b - b_0}  
		\end{equation}
		because of the second inequality provided by Lemma~\ref{lem:zzerozero}.		
		We can choose a sufficiently large value of $C_1$ such that the sum of the
		contributions of \eqref{eq:summand-1}, \eqref{eq:summand-2}, and
		\eqref{eq:summand-3} is bounded from above by $C_1 \epsilon \log \frac{1}{\epsilon}$
		for each $\epsilon\in (0, \epsilon_0)$.
		
		In this way we proved that 
		\[ 	\lim_{\tau \to 0^+} K_{\smallomega}(z_0-\tau) -	K_{\bigomega}(z_0) \leq 
		C_1 \epsilon \log \frac{1}{\epsilon}. \]
		Since this bound is uniform over $z_0$ it follows that on the left-hand side we
		can replace the left limit of the cumulative function by the cumulative function
		$K_{\omega}(z_0)$ itself.

		\smallskip
		
		For the remaining case $\epsilon \in \left[ \epsilon_0, \frac{1}{2} \right)$
		we can choose $C_2$ to be sufficiently large that
		\[  C_2 \epsilon \log \frac{1}{\epsilon} > 1, 
		      \qquad \text{for each }\epsilon \in \left[ \epsilon_0, \frac{1}{2} \right) \]
		so that the inequality \eqref{eq:main-theorem} is trivially fulfilled for any $C\geq C_2$.
				
		\smallskip
				
		In this way we proved that the upper bound
		\[ K_{\smallomega}(z_0) - K_{\bigomega}(z_0) \leq C \epsilon \log \frac{1}{\epsilon} \]
		holds true for any sufficiently large value of $C \geq \max(C_1, C_2)$.
		
		\bigskip
		
		The lower bound proof follows by analogous reasoning to the upper bound case, so
		we omit it for brevity. To reconstruct this argument, one need only replace each
		reference to Theorem~\ref{theo:module-continuity-A} with 
		Theorem~\ref{theo:module-continuity-B} throughout the preceding
		derivation. Additionally, note that an analogue of Lemma~\ref{lem:zzerozero} is required,
		addressing the quantity $\zzerominus$ that appears in
		Theorem~\ref{theo:module-continuity-B}.
	\end{proof}

\def\cprime{$'$}
\providecommand{\bysame}{\leavevmode\hbox to3em{\hrulefill}\thinspace}
\providecommand{\MR}{\relax\ifhmode\unskip\space\fi MR }
\providecommand{\MRhref}[2]{%
	\href{http://www.ams.org/mathscinet-getitem?mr=#1}{#2}
}
\providecommand{\href}[2]{#2}

\end{document}